\newcommand*{\rom}[1]{\expandafter\@slowromancap\romannumeral #1@}
\numberwithin{equation}{section}
\newtheorem{thm}{Theorem}[section]
\theoremstyle{plain}
\newtheorem{corollary}[thm]{Corollary}
\newtheorem{definition}[thm]{Definition}
\newtheorem{lemma}[thm]{Lemma}
\newtheorem{theorem}[thm]{Theorem}
\newtheorem*{notation*}{Notation}
\newcommand\be{\begin{equation}}
\newcommand\ee{\end{equation}}
\newcommand\bea{\begin{eqnarray}}
\newcommand\eea{\end{eqnarray}}
\newcommand\bi{\begin{itemize}}
	\newcommand\ei{\end{itemize}}
\newcommand\ben{\begin{enumerate}[(a)]}
	\newcommand\een{\end{enumerate}}
\newcommand\bc{\begin{center}}
	\newcommand\ec{\end{center}}
\def\ba#1\ea{\begin{align*}#1\end{align*}}
\newcommand{\R}{\ensuremath{\mathbb{R}}}
\newcommand{\C}{\ensuremath{\mathbb{C}}}
\newcommand{\Z}{\ensuremath{\mathbb{Z}}}
\newcommand{\N}{\mathbb{N}}
\newcommand{\D}{\mathbb{D}}
\begin{document}
	\title{Uniqueness of a Furstenberg system}
	
	
	\author{Vitaly Bergelson}
	\address{Department of Mathematics, Ohio State University, Columbus, OH 43210, USA}
	\email{vitaly@math.ohio-state.edu}
	\thanks{}
	
	\author{Andreu Ferr\'e Moragues}
	\address{Department of Mathematics, Ohio State University, Columbus, OH 43210, USA}
	\email{ferremoragues.1@osu.edu}
	\maketitle
	\begin{abstract}
		Given a countable amenable group $G$, a F\o lner sequence $(F_N) \subseteq G$, and a set $E \subseteq G$ with $\bar{d}_{(F_N)}(E)=\limsup_{N \to \infty} \frac{|E \cap F_N|}{|F_N|}>0$, Furstenberg's correspondence principle associates with the pair $(E,(F_N))$ a measure preserving system $\mathds{X}=(X,\mathcal{B},\mu,(T_g)_{g \in G})$ and a set $A \in \mathcal{B}$ with $\mu(A)=\bar{d}_{(F_N)}(E)$, in such a way that for all $r \in \N$ and all $g_1,\dots,g_r \in G$ one has $\bar{d}_{(F_N)}(g_1^{-1}E \cap \dots \cap g_r^{-1}E)\geq\mu((T_{g_1})^{-1}A \cap \dots \cap (T_{g_r})^{-1}A)$. We show that under some natural assumptions, the system $\mathds{X}$ is unique up to a measurable isomorphism. We also establish variants of this uniqueness result for non-countable discrete amenable semigroups as well as for a generalized correspondence principle which deals with a finite family of bounded functions $f_1,\dots,f_{\ell}: G \rightarrow \C$. 
	\end{abstract}
	
	\section{Introduction}
	Szemer\'edi's celebrated theorem on arithmetic progressions states that any ``large'' set $E \subseteq \N$ in the sense that for some sequence of intervals $(I_N)$ with $|I_N| \to \infty$ as $N \to \infty$
	\begin{equation}\label{intro0} \bar{d}_{(I_N)}(E)=\limsup_{N \to \infty} \frac{|E \cap I_N|}{|I_N|}>0\end{equation}
	contains arbitrarily long arithmetic progressions, i.e. for all $k \in \N$ there is some $n \in \N$ such that $E \cap (E-n) \cap \dots (E-kn) \neq \emptyset $. In the seminal paper \cite{f1}, Furstenberg derived Szemer\'edi's theorem from his multiple recurrence result, which states that for any probability measure preserving system $(X,\mathcal{B},\mu,T)$, for any $k \in \N$ and for any $A \in \mathcal{B}$ with $\mu(A)>0$ there is some $n \in \N$ such that $\mu(A \cap T^{-n}A \cap \dots \cap T^{-kn}A)>0$. 
	\\ \\
	Furstenberg's derivation of Szemer\'edi's theorem from his multiple recurrence result hinges on a correspondence principle which allows one to associate with any ``large'' set $E$ a measure preserving system $(X,\mathcal{B},\mu,T)$ and a set $A \in \mathcal{B}$ with $\mu(A)>0$ such that $A \cap T^{-n}A \cap \dots T^{-kn}A \neq \emptyset$ implies $E \cap (E-n) \cap \dots \cap (E-kn) \neq \emptyset$.
	\\ \\
	We now describe Furstenberg's approach to creating such a measure preserving system. Given a ``large'' set $E$, he identifies $\mathbb{1}_E$ with a point $\omega=(\mathbb{1}_E(n))_{n \in \Z}$ in the symbolic space $\{0,1\}^{\Z}$. Now let $X=\overline{\{ T^n\omega : n \in \Z\}}$, where $T$ is the restriction of the shift map $\sigma((x_n)_{n \in \Z})=(x_{n+1})_{n \in \Z}$ to $X$, $\mathcal{B}_X$ be the completion of the Borel $\sigma$-algebra and $\mu$ be a weak* limit point of a sequence of measures $\frac{1}{|I_N|}\sum_{n \in I_N} \delta_{T^n\omega}$.
	\\ \\
	The system $(X,\mathcal{B}_X,\mu,T)$ satisfies the following natural conditions:
	\begin{itemize}
		\item[(1)] There is a set $A \in \mathcal{B}_X$ (namely, $A=\{x \in X: x(0)=1\}$) such that $\mu(A)=\bar{d}_{(I_N)}(E)>0$.
		\item[(2)] There is a subsequence $(I_{N_k})$ such that $\mu=\textrm{w*-}\lim_{k \to \infty} \frac{1}{|I_{N_k}|}\sum_{n \in I_{N_k}} \delta_{T^n\omega}$, which implies that for any $r \in \N$ and $h_1,\dots,h_r \in \Z$ we have
		\[ d_{(I_{N_k})}(E-h_1 \cap \dots \cap E-h_r)=\lim_{k \to \infty} \frac{|(E-h_1) \cap \dots \cap (E-h_r) \cap I_{N_k}|}{|I_{N_k}|}=\]
		\[\mu(T^{-h_1}A \cap \dots \cap T^{-h_r}A).\]
		\item[(3)] The $\sigma$-algebra $\mathcal{B}_X$ is generated by the family of sets $\{T^nA : n \in \Z\}$.
	\end{itemize}
	In principle, there are other ways to create a probability measure preserving system $(X,\mathcal{B},\mu,T)$ satisfying the conditions (1), (2) and (3) above (see Section 2). We will call any such system a \emph{Furstenberg system} associated with the pair $(E,(I_N))$.
	\\ \\
	The goal of this short paper is to show that any system satisfying (1), (2) and (3) is (metrically) isomorphic\footnote{In this paper ``isomorphism'' means \emph{metric} isomorphism. We will also be using in Section 4 a weaker, boolean, form of isomorphism which we will refer to as \emph{conjugacy}.} to the symbolic Furstenberg system described above. Actually, we will do this in the natural framework of amenable groups. A countable amenable group $G$ can be conveniently defined via the notion of a F\o lner sequence. A sequence $(F_N)$ of finite non-empty subsets of a countable group $G$ is a \emph{(left) F\o lner sequence} if for all $g \in G$ 
	\[ \lim_{N \to \infty} \frac{|F_N \Delta gF_N|}{|F_N|}=0.\] 
	A countable group $G$ is \emph{amenable} if it admits a (left) F\o lner sequence\footnote{Every countable amenable group $G$ admits also right- and indeed two-sided F\o lner sequences (see Corollary 5.3 in \cite{namioka}). Throughout this paper we deal only with left F\o lner sequences and routinely omit the adjective "left".}. Given a F\o lner sequence $(F_N) \subseteq G$ and a set $E \subseteq G$ we write
	\[ \bar{d}_{(F_N)}(E)=\limsup_{N \to \infty} \frac{|E \cap F_N|}{|F_N|}. \]
	Before continuing our discussion we want to formulate a general version of Furstenberg's correspondence principle (see for example \cite{b1}). 
	\begin{theorem}[Furstenberg's Correspondence Principle]\label{fcorrespondence}
		Let $G$ be a countable amenable group. Let $E \subseteq G$ with $\bar{d}_{(F_N)}(E)>0$ for some F\o lner sequence $(F_N) \subseteq G$. Then there exist a probability measure preserving system $(X,\mathcal{B},\mu,(T_g)_{g \in G})$, a set $A \in \mathcal{B}$ with $\mu(A)=\bar{d}_{(F_N)}(E)$ and a subsequence $(F_{N_k})$ such that for any $r \in \N$ and $g_1, \dots, g_r \in G$ one has:
		\begin{multline}\label{intro2} d_{(F_{N_k})}(g_1^{-1}E \cap \dots \cap g_r^{-1}E)=\lim_{k \to \infty} \frac{|g_1^{-1}E\cap \dots \cap g_r^{-1}E \cap F_{N_k}|}{|F_{N_k}|}= \\\mu((T_{g_1})^{-1}A \cap \dots \cap (T_{g_r})^{-1}A).
		\end{multline}
	\end{theorem}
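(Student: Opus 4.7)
The plan is to realize the system explicitly as the shift on the symbolic space $X = \{0,1\}^{G}$ equipped with the product topology; since $G$ is countable, $X$ is compact and metrizable. Identify $E$ with the point $\omega := \mathbb{1}_E \in X$, define the left $G$-action by $(T_g x)(h) = x(hg)$ (one checks directly that $T_g T_{g'} = T_{gg'}$ and that each $T_g$ is a self-homeomorphism), and set $A = \{x \in X : x(e) = 1\}$, a clopen cylinder. A routine unpacking yields $T_h \omega \in T_g^{-1}A \Leftrightarrow \omega(gh) = 1 \Leftrightarrow h \in g^{-1}E$, so that for every finite $F \subseteq G$ and all $g_1,\dots,g_r \in G$,
\[ \frac{|g_1^{-1}E \cap \cdots \cap g_r^{-1}E \cap F|}{|F|} \;=\; \frac{1}{|F|}\sum_{h \in F}\mathbb{1}_{T_{g_1}^{-1}A \cap \cdots \cap T_{g_r}^{-1}A}(T_h \omega). \]

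Next I would extract the invariant measure. First, pass to a subsequence $(F_{N_j})$ with $|E \cap F_{N_j}|/|F_{N_j}| \to \bar d_{(F_N)}(E)$. Form the empirical measures $\mu_N := \frac{1}{|F_N|}\sum_{h \in F_N}\delta_{T_h \omega} \in \mathcal{P}(X)$; by weak* compactness (and metrizability) of $\mathcal{P}(X)$, a further subsequence, which I again denote $(F_{N_k})$, gives $\mu_{N_k} \to \mu$ for some $\mu \in \mathcal{P}(X)$. That $\mu$ is $G$-invariant is the standard F\o lner computation: for $f \in C(X)$ and $g \in G$,
\[ \Bigl|\int f \circ T_g\, d\mu_N - \int f\, d\mu_N\Bigr| = \frac{1}{|F_N|}\Bigl|\sum_{k \in gF_N} f(T_k \omega) - \sum_{h \in F_N} f(T_h \omega)\Bigr| \le \|f\|_\infty\,\frac{|gF_N \triangle F_N|}{|F_N|} \to 0. \]
Finally, each $T_{g_i}^{-1}A$ is clopen, so the indicator of the intersection is continuous on $X$; weak* convergence then identifies $\mu(T_{g_1}^{-1}A \cap \cdots \cap T_{g_r}^{-1}A)$ with the limit of the density ratios displayed above. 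Taking $r=1$, $g_1 = e$ yields $\mu(A) = \bar d_{(F_N)}(E)$.

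The only real subtlety, which is notational rather than mathematical, is the choice of shift convention: one must pick the $G$-action so that the \emph{left} F\o lner condition on $(F_N)$ matches the direction in which orbits are averaged and so that the cylinder $A$ records memberships in the \emph{left} translates $g^{-1}E$. The convention $(T_g x)(h) = x(hg)$ aligns both requirements. Otherwise the argument is entirely routine; as a bonus, the generation condition (3) from the introduction (namely, that $\mathcal{B}_X$ is generated by $\{T_g^{-1}A : g \in G\}$) holds automatically, since these cylinders generate the Borel $\sigma$-algebra on $\{0,1\}^G$.
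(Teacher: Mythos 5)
Your proposal is correct and is essentially the paper's own construction (approach (i) of Section 2): the shift on $\{0,1\}^G$, the point $\omega=\mathbb{1}_E$, the clopen cylinder $A=\{x: x(e)=1\}$, a subsequence realizing the upper density followed by a weak* limit of empirical measures, with invariance from the F\o lner condition and the identification of intersections via continuity of indicators of clopen sets. The only difference is your explicit care with the shift convention (you use $(T_gx)(h)=x(hg)$, which makes $g\mapsto T_g$ a genuine left action compatible with left F\o lner averaging, whereas the paper writes $(S_gx)_{g_0}=x_{gg_0}$), and this is a cosmetic rather than a substantive divergence.
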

	Theorem \ref{fcorrespondence} justifies the following definition:
	\begin{definition}\label{fsystem}
		Let $G$ be a countable amenable group, let $(F_N) \subseteq G$ be a F\o lner sequence, and let $E \subseteq G$ with $\bar{d}_{(F_N)}(E)>0$. We say that a standard\footnote{This means that the probability space $(X,\mathcal{B},\mu)$ is a Lebesgue space, so, in particular, it is a complete measure space (see Definition 2.3 in \cite{walters}).} probability measure preserving system $(X,\mathcal{B},\mu,(T_g)_{g \in G})$ is a \emph{Furstenberg system} associated with the pair $(E,(F_N))$ if
		\begin{itemize}
			\item[(1)] There is a set $A \in \mathcal{B}$ such that $\mu(A)=\bar{d}_{(F_N)}(E)>0$.
			\item[(2)] There is a subsequence $(F_{N_k})$ such that for all $r \in \N$ and all $g_1,\dots,g_r \in G$ we have
			\[ d_{(F_{N_k})}(g_1^{-1}E \cap \dots \cap g_r^{-1}E)=\mu((T_{g_1})^{-1}A \cap \dots \cap (T_{g_r})^{-1}A).\]
			\item[(3)] The $\sigma$-algebra $\mathcal{B}$ is (the completion of) the $\sigma$-algebra generated by the family of sets $\{T_gA : g \in G\}$.
		\end{itemize}
	\end{definition}
	
	\begin{theorem}\label{mainthm}
		Let $G$ be a countable amenable group and let $(F_N) \subseteq G$ be a F\o lner sequence. Let $E \subseteq G$ with $\bar{d}_{(F_N)}(E)>0$ and let $\mathds{X}=(X,\mathcal{B},\mu,(T_g)_{g \in G})$ be a Furstenberg system for the pair $(E,(F_N))$. Then, there exists a subsequence $(F_{N_k})$ such that the system $\mathds{X}$ is isomorphic to the symbolic system $\mathds{Y}=(Y,\mathcal{B}_Y,\nu,(S_g)_{g \in G})$, where $Y=\overline{\{S_g\omega : g \in G\}}$, $\omega=(\mathbb{1}_E(g))_{g \in G}$, the maps $S_g$ are given by $(S_gx)_{g_0}=x_{gg_0}$ for all $g, g_0 \in G$,  $\nu=\textrm{w*-}\lim_{k \to \infty} \frac{1}{|F_{N_k}|}\sum_{g \in F_{N_k}} \delta_{S_g\omega}$, and $\mathcal{B}_Y$ is the completion of the Borel $\sigma$-algebra of $Y$
	\end{theorem}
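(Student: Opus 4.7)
\emph{Proof proposal.} I will build an explicit equivariant, measure-preserving, measurable map $\pi\colon X\to\{0,1\}^G$ taking values $\mu$-a.s.\ in $Y$, and then show it is an isomorphism between $\mathds{X}$ and $\mathds{Y}$.

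\emph{Choice of subsequence.} The definition of a Furstenberg system provides a subsequence $(F_{N_k})$ along which property~(2) holds. Since the space of Borel probability measures on the compact metric space $\{0,1\}^G$ is weak*-sequentially compact, I pass to a further subsequence, still written $(F_{N_k})$, along which $\frac{1}{|F_{N_k}|}\sum_{g\in F_{N_k}}\delta_{S_g\omega}$ converges weak* to a probability measure $\nu$. Each empirical measure is supported on the closed set $Y$, hence so is $\nu$; property~(2) is preserved along the refined subsequence.

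\emph{Step 1 (the map and its equivariance).} Define $\pi\colon X\to\{0,1\}^G$ by $\pi(x)(g):=\mathbb{1}_A(T_gx)$ for $x\in X$ and $g\in G$. Measurability follows from $A\in\mathcal{B}$ and the measurability of each $T_g$. A direct computation, using the action conventions adopted for $(T_g)_{g\in G}$ and $(S_g)_{g\in G}$, verifies the intertwining $\pi\circ T_h=S_h\circ\pi$ for every $h\in G$.

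\emph{Step 2 ($\pi_*\mu=\nu$).} Two Borel probability measures on $\{0,1\}^G$ that agree on cylinder sets are equal. For a basic cylinder $C=\{y:y(g_i)=1,\ i=1,\ldots,r\}$,
\[
\pi_*\mu(C)=\mu\Big(\bigcap_{i=1}^{r}T_{g_i}^{-1}A\Big).
\]
On the other hand, by construction of $\nu$, $\nu(C)$ is the limit as $k\to\infty$ of $\frac{1}{|F_{N_k}|}\#\{g\in F_{N_k}:(S_g\omega)(g_i)=1\ \forall i\}$, which, after unpacking the shift, is the density $d_{(F_{N_k})}\bigl(\bigcap_{i} g_i^{-1}E\bigr)$. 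Property~(2) identifies these two quantities. Cylinders with some coordinates constrained to $0$ are handled by inclusion--exclusion. Hence $\pi_*\mu=\nu$; in particular, $\pi(x)\in Y$ for $\mu$-a.e.\ $x$.

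\emph{Step 3 (injectivity modulo null sets).} Since $\pi^{-1}\bigl(\{y:y(g)=1\}\bigr)=T_g^{-1}A=T_{g^{-1}}A$, the pullback $\sigma$-algebra $\pi^{-1}(\mathcal{B}_Y)$ contains every member of $\{T_gA:g\in G\}$, so by property~(3) it equals $\mathcal{B}$ modulo $\mu$-null sets. Because $(X,\mathcal{B},\mu)$ is a Lebesgue space, a measure-preserving map between Lebesgue spaces whose pullback $\sigma$-algebra is the full $\sigma$-algebra (mod null) is necessarily an isomorphism onto its image; this is the standard consequence of Rokhlin's theory of Lebesgue spaces. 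Combined with Steps~1 and~2, this produces the desired isomorphism $\pi\colon\mathds{X}\xrightarrow{\sim}\mathds{Y}$.

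\emph{Main obstacle.} The delicate point is Step~3: leveraging property~(3) together with the Lebesgue-space hypothesis to upgrade $\pi$ from a factor map to a genuine measurable isomorphism. The remaining ingredients are routine: a diagonal/compactness extraction yielding a single subsequence compatible with both property~(2) and weak* convergence of the empirical measures, and the cylinder-set computation matching $\pi_*\mu$ with $\nu$ via property~(2).
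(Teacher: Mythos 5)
Your argument is correct, but it is organized quite differently from the paper's. You build the isomorphism directly as the coding map $\pi(x)(g)=\mathbb{1}_A(T_gx)$, identify $\pi_*\mu$ with $\nu$ on cylinder sets via property (2), and then use property (3) plus the von Neumann/Rokhlin point-realization theory for Lebesgue spaces to conclude that a factor map whose pullback $\sigma$-algebra is all of $\mathcal{B}$ (mod null sets) is invertible mod null sets. The paper never writes down this coding map on $X$ itself: it first forms the separable $C^*$-algebra $\mathcal{A}_f\subseteq L^{\infty}(\mu)$ generated by the shifts of $\mathbb{1}_A$, passes to its Gelfand spectrum $Z$ to obtain a topological model $\mathds{Z}$, identifies $\mathds{X}$ with $\mathds{Z}$ via Theorem \ref{peterseniso} (the same von Neumann ingredient you invoke), and only then embeds $Z$ into $\{0,1\}^G$ by evaluating characters at the idempotents $T_g\mathbb{1}_A$; the measure computation there (equation \eqref{densityormean}) is exactly your Step 2. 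So the two proofs rest on the same two essential inputs --- property (2) to pin down the target measure and the Lebesgue-space hypothesis together with property (3) to upgrade a measure-algebra isomorphism to a point isomorphism --- but your route is shorter and more concrete for this particular theorem, and has the minor bonus that $\pi$ is equivariant everywhere rather than only a.e. What the paper's $C^*$-algebraic detour buys is reusability: the same argument transfers almost verbatim to the uncountable-semigroup setting of Section 4 (where no point isomorphism is available and only conjugacy is claimed) and to the Appendix, where the generators $F_1,\dots,F_\ell$ are elements of $L^{\infty}(\mu)$ defined only up to null sets, so a pointwise-defined coding map like yours is not immediately available. Two small points you should make explicit if you write this up: the intertwining $\pi\circ T_h=S_h\circ\pi$ does depend on matching the (anti-)action convention of $(S_g)_{g\in G}$ given by $(S_gx)_{g_0}=x_{gg_0}$ with that of $(T_g)_{g\in G}$, and the further passage to a subsequence in your first step is actually unnecessary, since property (2) already forces the empirical measures to converge weak* along $(F_{N_k})$ (their integrals against the cylinder indicators, which span a dense subspace of $C(\{0,1\}^G)$, all converge).
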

	While in Theorem \ref{mainthm} we choose to work, for the sake of simplicity, with countable amenable groups $G$, the proof extends without major modifications to countable cancellative amenable semigroups (they possess F\o lner sequences). In particular, Theorem \ref{mainthm} is valid for, say, $(\N,+)$ and $(\N, \cdot)$. One can actually extend the framework to an even more general setup. A discrete (not necessarily countable) group $G$ is called \emph{amenable} if there exists a left-invariant mean on $\ell^{\infty}(G)$. Call a set $E \subseteq G$ \emph{large} if, for some invariant mean $m$, we have $m(\mathbb{1}_E)>0$. One can modify Definition \ref{fsystem}, introduce a notion of Furstenberg system associated with the pair $(E,m)$ and then establish a general version of Theorem \ref{mainthm} (see Section 4). 
	
	The method of constructing the symbolic Furstenberg system described above for the case $G=\Z$ works equally well if one replaces the indicator function $\mathbb{1}_E$ with any bounded function $f: G \rightarrow \C$. Moreover, one can actually work with any finite family of bounded $\C$-valued functions (see for example \cite{fh1}). We will show in the Appendix that Theorem \ref{mainthm} can be naturally extended to this setup as well.
	\\ \\
	The structure of the paper is as follows. In Section 2 we describe four ways to construct a Furstenberg system. In Section 3 we prove Theorem \ref{mainthm}. In Section 4 we obtain a counterpart for Theorem \ref{mainthm} for not necessarily countable groups $G$. We also establish necessary and sufficient conditions for two pairs $(E_1,(F_N))$ and $(E_2,(G_N))$ to admit isomorphic Furstenberg systems. In Section 5 we prove that, given an ergodic measure preserving system $(X,\mathcal{B},\mu,(T_g)_{g \in G})$ and a set $A \in \mathcal{B}$ with $\mu(A)>0$, there is a set $E \subseteq G$ and a F\o lner sequence $(F_N)$ such that $\bar{d}_{(F_N)}(E)=\mu(A)$ and for which \eqref{intro2} holds. Finally, the Appendix deals with Furstenberg systems associated with a finite family of bounded functions.
	\begin{notation*} Throughout this paper, given a topological space $X$, we will routinely denote by $\mathcal{B}_X$ the completion of the Borel $\sigma$-algebra of $X$.
	\end{notation*}
	\section{Constructing a Furstenberg system}
	The purpose of this section is to describe four natural approaches to constructing a Furstenberg system $(X,\mathcal{B},\mu,(T_g)_{g \in G})$ associated with a ``large'' set $E \subseteq G$  (i.e. a set with $\bar{d}_{(F_N)}(E)>0$ for some F\o lner sequence $(F_N) \subseteq G$). The first of these approaches is a version for general amenable groups of Furstenberg's original construction (for $G=\Z)$, based on the symbolic space $Y=\{0,1\}^G$, which was reviewed in the introduction; we present it in this section for the sake of completeness. All four of the presented approaches have a unifying thread, which involves the usage (either implicitly or explicitly) of Riesz's representation theorem for positive linear functionals.
	\begin{itemize}
		\item[(i)] (cf. \cite{f1}). We consider the symbolic space $Y=\{0,1\}^G$ and proceed as follows. First, let $(F_{N_k})$ be a subsequence of $(F_N)$ such that $d_{(F_{N_k})}(E)=\bar{d}_{(F_N)}(E)>0$. Then, letting $\omega=(\mathbb{1}_E(g))_{g \in G}$, we consider a sequence of measures
		\begin{equation}
		\frac{1}{|F_{N_k}|}\sum_{g \in F_{N_k}} \delta_{S_g\omega}.
		\end{equation}
		Now, the space of probability measures on $\{0,1\}^G$ is a compact, metric space with respect to the weak* topology. Passing, if necessary, to a subsequence, let
		\[ \nu=\lim_{k \to \infty} \frac{1}{|F_{N_k}|}\sum_{g \in F_{N_k}} \delta_{S_g\omega}.\]
		Let $A=\{x : x(e)=1\}$. We have $\bar{d}_{(F_N)}(E)=d_{(F_{N_k})}(E)=\nu(A)$,  and
		\[  \nu((S_{g_1})^{-1}A \cap \dots \cap (S_{g_r})^{-1}A)=d_{(F_{N_k})}(g_1^{-1}E \cap \dots \cap g_r^{-1}E)\]
		for all $r \in \N$ and all $g_1,\dots,g_r \in G$. Observe now that the family $\{S_gA : g \in G\}$ generates the Borel $\sigma$-algebra of $Y$. Therefore, the completion of the $\sigma$-algebra generated by $A$ is equal to $\mathcal{B}_Y$, and thus the system $\mathds{Y}=(Y,\mathcal{B}_Y,\nu,(S_g)_{g \in G})$ satisfies (1), (2) and (3).
		\item[(ii)] In the presentation of the second approach (which actually was hinted at in \cite{f1}) we follow \cite{b1}. Since the family of sets of the form $\bigcap_{i=1}^r g_i^{-1}E,\  r \in \N, g_1,\dots,g_r \in G$, is countable, we can find a F\o lner subsequence $(F_{N_k})$ such that $d_{(F_{N_k})}(E)=\bar{d}_{(F_N)}(E)>0$ and
		\begin{equation}\label{limitfunctional}\lim_{k \to \infty}\frac{ \left|\bigcap_{i=1}^r g_i^{-1}E \cap F_{N_k}\right|}{|F_{N_k}|}=d_{(F_{N_k})}\left( \bigcap_{i=1}^r g_i^{-1}E  \right)
		\end{equation}
		exists for all $r \in \N$ and $g_1,\dots,g_r \in G$.
		\\ \\
		Let $\mathcal{A}=\textrm{Span}_{\C}\{\mathbb{1}, \prod_{i=1}^r \mathbb{1}_{g_i^{-1}E} : r \in \N, g_1,\dots,g_r \in G\}$, a vector subspace of $\ell^{\infty}(G)$. Equation \eqref{limitfunctional} allows us to define a functional $L: \mathcal{A} \rightarrow \C$ by setting $L(\mathbb{1})=1$ and $L\left( \prod_{i=1}^r \mathbb{1}_{g_i^{-1}E}\right)=d_{(F_{N_k})}\left( \bigcap_{i=1}^r g_i^{-1}E \right)$ and extending by linearity.
		\\ \\
		One can easily check that $|L(f)| \leq ||f||_{\ell^{\infty}(G)}$ for all $f \in \mathcal{A}$, so we can in fact extend $L$ by continuity to $\mathcal{A}_E$, the closure of $\mathcal{A}$ with respect to the $\ell^{\infty}(G)$-norm. Notice that $\mathcal{A}_E$ is a separable, unital $C^*$-algebra. By Gelfand's representation theorem, there is a compact metric space $X$ so that $\mathcal{A}_E$ is isometrically isomorphic to $C(X)$. Let $\Gamma: \mathcal{A}_E \longrightarrow C(X)$ be the corresponding isomorphism. 
		\\ \\
		The functional $L$ induces a positive linear functional $\tilde{L}: C(X) \rightarrow \C$. By Riesz's representation theorem, there exists a regular Borel probability measure $\mu$ on the Borel $\sigma$-algebra of $X$ such that for any $\varphi \in \mathcal{A}_E$
		\[L(\varphi)=\tilde{L}(\Gamma(\varphi))=\int_X \Gamma(\varphi) \ d\mu,\]
		Since $\mathbb{1}_E$ is an idempotent, and since $\Gamma$ is, in particular, an algebraic isomorphism, then $\Gamma(\mathbb{1}_E)=\mathbb{1}_A$, where $A \in \textrm{Borel}(X)$ and satisfies $\mu(A)=\tilde{L}(\mathbb{1}_A)=\tilde{L}(\Gamma(\mathbb{1}_E))=\bar{d}_{(F_N)}(E)$. 
		\\ \\
		The shift operators given by $\varphi(h) \mapsto \varphi(gh)$, $\varphi \in \mathcal{A}_E, g \in G$ induce a $G$-antiaction on $C(X)$, and are, by a theorem of Banach, induced by homeomorphisms $T_g: X \rightarrow X$. Using again the fact that $\Gamma$ is an algebraic isomorphism we have for all $r \in \N$, and $g_1,\dots,g_r \in G$
		\begin{equation}\label{measurepreservinghomeo} \mu((T_{g_1})^{-1}A \cap \dots \cap (T_{g_r})^{-1}A)=d_{(F_{N_k})}(g_1^{-1}E \cap \dots \cap g_r^{-1}E). \end{equation}
		(Note that by \eqref{measurepreservinghomeo} the homeomorphisms $T_g, g \in G$ preserve the measure $\mu$). Clearly $\mathds{X}=(X,\mathcal{B}_X,\mu,(T_g)_{g \in G})$ satisfies (1), (2) and (3).
	\end{itemize}
	In order to describe the third approach we need some facts on left invariant means. 
	Recall that a discrete semigroup $G$ is left amenable if there exists a left invariant mean $m: \ell^{\infty}(G) \rightarrow \C$\footnote{We say that $m \in \ell^{\infty}(G)^*$ is a left \emph{invariant mean} if it is a continuous linear functional from $\ell^{\infty}(G)$ to $\C$ such that (i) for every $f \in \ell^{\infty}(G)$ and for every $g \in G$ we have $m({}_gf)=m(f)$, where ${}_gf(x):=f(gx)$ for all $x \in G$, (ii) $m(f) \geq 0$ for any non-negative function $f: G \rightarrow \C$, and (iii) $m(1)=1$.}. Note that, for discrete countable groups, this is equivalent to the definition of left amenability given in the Introduction.
	\begin{itemize}
		\item[(iii)] We follow \cite{bl1} and work with $G$-invariant means directly. Namely, let $m: \ell^{\infty}(G) \rightarrow \C$ be a left-invariant mean on $G$. Take $X=\beta G$, the Stone-\v{C}ech compactification of $G$. Use Riesz's representation theorem to get the unique measure $\mu$ corresponding to $m$, so that $m(f)=\int_X \hat{f} \ d\mu$, where the function $\hat{f}$ is the extension by continuity to $\beta G$ of $f \in \ell^{\infty}(G)$. The maps $h \mapsto gh$, $h \in G$ have a unique continuous extension to $\beta G$ which we denote by $T_g$. Since $m$ is a $G$-invariant mean, we have that the maps $T_g$ are measure preserving homeomorphisms of $\beta G$.
		\\ \\
		Let $A=\overline{E}$, where $\bar{E}$ denotes the closure of $E$ in $\beta G$. Observe that
		\[ \mu(A \cap (T_{g_1})^{-1}A \cap \dots \cap (T_{g_r})^{-1}A)=m(\mathbb{1}_E \cdot \prod_{i=1}^r \mathbb{1}_{g_i^{-1}E}),\]
		Let $\mathcal{C}_A$ be the restriction of the $\sigma$-algebra $\mathcal{B}_X$ to the completion of the $\sigma$-algebra generated by $A$. Then, the system $\mathds{X}=(X,\mathcal{C}_A,\mu,(T_g)_{g \in G})$ satisfies (1), (2) and (3).
		\item[(iv)] The last approach we present follows \cite{bmc} and has elements in common with (i) and (iii). Let $Y=\{0,1\}^G$ and consider the cylinders of the form
		\begin{equation}\label{cylinder} \{ x \in Y: x(h_1)=\varepsilon_1,\dots, x(h_r)=\varepsilon_r \},\end{equation}
		where $r \in \N$, and $h_1,\dots,h_r \in G$ are distinct and $\varepsilon_i \in \{0,1\}$. For any cylinder $C$ of the form \eqref{cylinder}, put
		\[ \lambda(C)=m\left(\prod_{i=1}^r \mathbb{1}_{h_i^{-1}E_i}\right),\]
		where $E_i$ is equal to $E$ or $E^c$ according to whether $\varepsilon_i=1$ or $\varepsilon_i=0$, respectively. Since $Y$ is a compact metric space, the premeasure $\lambda$ extends to a measure $\mu$ on the Borel $\sigma$-algebra of $Y$. One can easily check that $\mu$ is invariant under the shifts $S_g$ as in (i). Letting $A=\{x : x(e)=1\}$ we get
		\begin{equation}\label{determinemu}
		\mu((T_{g_1})^{-1}A \cap \dots \cap (T_{g_r})^{-1}A)=m\left(\prod_{i=1}^r \mathbb{1}_{g_i^{-1}E}\right)
		\end{equation}
		for all $r \in \N$ and $g_1,\dots,g_r \in G$. Moreover, the measure $\mu$ is determined by the intersections appearing in \eqref{determinemu} above, so the completion of the $\sigma$-algebra generated by $A$ is equal to $\mathcal{B}_Y$, and the system $\mathds{Y}=(Y,\mathcal{B}_Y,\mu,(S_g)_{g \in G})$ satisfies (1), (2) and (3).
	\end{itemize}
	\section{Theorem \ref{mainthm} and a combinatorial corollary.}
	The main purpose of this section is to prove Theorem \ref{mainthm} that characterizes Furstenberg systems for countable groups up to a measurable isomorphism. In particular, we show that any Furstenberg system is isomorphic to the symbolic measure preserving system $\mathds{Y}$ described in item (i) of Section 2. As a corollary we obtain a criterion which determines when the Furstenberg systems of two pairs $(E,(F_N)), (D,(G_N))$ are isomorphic (here $E, D$ are subsets of $G$ and $(F_N), (G_N)$ are F\o lner sequences).
	\\ \\
	In the following two sections and in the Appendix, we will make repeated use of the following notation: if $\mathcal{A}$ is a commutative, unital $C^*$-algebra, then $\hat{\mathcal{A}}$ denotes the space of characters of $\mathcal{A}$, i.e. the space of algebra homomorphisms $\chi: \mathcal{A} \rightarrow \C$. Recall that $\hat{\mathcal{A}}$ is a compact Hausdorff space with respect to the weak* topology (in fact, a metric space if the algebra $\mathcal{A}$ is separable).
	\\ \\
	We start with the following technical result which will be needed in the sequel:
	\begin{theorem}[cf. Satz 1 in \cite{vonneumann} and Remark 4.1 in \cite{petersen}]\label{peterseniso}
		Consider two standard measure preserving systems $\mathds{X}=(X,\mathcal{B}_X,\mu,(T_g)_{g \in G})$ and $\mathds{Y}=(Y,\mathcal{B}_Y,\nu,(S_g)_{g \in G})$. Let $\Phi: L^{\infty}(X,\mathcal{B}_X,\mu) \rightarrow L^{\infty}(Y,\mathcal{B}_Y,\nu)$ be a Banach algebra isomorphism satisfying 
		\[ \Phi(f \circ T_g)=\Phi(f) \circ S_g \quad \textrm{ and } \int_Y \Phi(f) \ d\nu=\int_X f \ d\mu \textrm{ for all } g \in G, f \in L^{\infty}(X,\mathcal{B}_X,\mu).\]
		Then, there exists an isomorphism between $\mathds{X}$ and $\mathds{Y}$.
	\end{theorem}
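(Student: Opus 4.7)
The plan is to construct, at the level of measure algebras, a Boolean $\sigma$-isomorphism induced by $\Phi$, and then lift it to a pointwise isomorphism using the standardness hypothesis on $\mathds{X}$ and $\mathds{Y}$.

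First, since $\Phi$ is an algebra homomorphism, it sends idempotents to idempotents: for any $A \in \mathcal{B}_X$, the relation $\mathds{1}_A^2 = \mathds{1}_A$ forces $\Phi(\mathds{1}_A)$ to be an idempotent of $L^{\infty}(Y,\mathcal{B}_Y,\nu)$, hence (the class of) an indicator $\mathds{1}_{\Psi(A)}$ for a unique $\Psi(A) \in \mathcal{B}_Y/\nu$. Using $\Phi(\mathds{1})=\mathds{1}$ (forced by surjectivity of $\Phi$), multiplicativity, and linearity, one checks $\Psi(A \cap B)=\Psi(A) \cap \Psi(B)$ and $\Psi(A^c)=\Psi(A)^c$, so $\Psi$ is a Boolean isomorphism of measure algebras. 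The integral-preservation hypothesis gives $\nu(\Psi(A))=\mu(A)$, and standard measure-theoretic bookkeeping upgrades $\Psi$ to a $\sigma$-isomorphism. Finally, the intertwining condition transfers: from $\mathds{1}_{T_g^{-1}A} = \mathds{1}_A \circ T_g$ and the hypothesis,
\[ \mathds{1}_{\Psi(T_g^{-1}A)} = \Phi(\mathds{1}_A \circ T_g) = \Phi(\mathds{1}_A) \circ S_g = \mathds{1}_{S_g^{-1}\Psi(A)}, \]
so $\Psi \circ T_g^{-1} = S_g^{-1} \circ \Psi$ on the measure algebra of $\mathds{X}$, for every $g \in G$.

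The critical step is then to realize $\Psi$ by an actual point map. Here I would invoke the classical theorem of von Neumann that every $\sigma$-isomorphism between the measure algebras of two Lebesgue probability spaces is induced by a measure-preserving bimeasurable bijection $\phi \colon Y_0 \to X_0$ between conull subsets of $Y$ and $X$, unique up to modification on a null set, satisfying $\phi^{-1}(A) = \Psi(A)$ mod $\nu$ for every $A \in \mathcal{B}_X$. For each fixed $g \in G$, this rephrases as $T_g \circ \phi = \phi \circ S_g$ $\nu$-almost everywhere; since $G$ is countable, a single conull subset of $Y$ accommodates this equation simultaneously for all $g$, and the resulting $\phi$ is the desired measurable isomorphism between $\mathds{X}$ and $\mathds{Y}$.

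The main obstacle --- and the only place where the standardness hypothesis is genuinely needed --- is the lifting of the abstract measure-algebra isomorphism $\Psi$ to the pointwise realization $\phi$. The remaining steps are formal consequences of $\Phi$ being a unital, integral- and action-preserving algebra isomorphism, and the countability of $G$ (implicit in the Section 3 setup) is what permits the almost-everywhere equivariance to be enforced on a common conull set.
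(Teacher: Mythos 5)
Your proposal is correct and follows essentially the same route as the paper: both pass from $\Phi$ to a Boolean isomorphism of measure algebras (the paper cites Remark 4.1 in \cite{petersen} for the step you carry out explicitly via idempotents), both invoke von Neumann's Satz 1 together with standardness to realize it as a point map, and both use countability of $G$ to intersect the conull sets on which equivariance holds. The only difference is cosmetic --- your point map goes from $Y$ to $X$ rather than from $X$ to $Y$ --- so no further comment is needed.
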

	\begin{proof}
		Arguing as in the proof of Remark 4.1 in \cite{petersen} one deduces that the measure algebras $(\mathcal{B}_X,\mu)$ and $(\mathcal{B}_Y,\nu)$ are isomorphic (as Boolean algebras). Now Satz 1 in \cite{vonneumann} (see Theorem 1.4.6 of \cite{petersen}) implies that $\Phi$ arises from a point isomorphism, say $\tilde{\Phi}: (X,\mathcal{B}_X,\mu) \longrightarrow (Y,\mathcal{B}_Y,\nu)$. Finally, the condition $\Phi(f \circ T_g)=\Phi(f) \circ S_g$, $g \in G, f \in L^{\infty}(X,\mathcal{B}_X,\mu)$ implies that for every $g \in G$, there is a set $X_g$ with $\mu(X_g)=1$ such that $S_g\tilde{\Phi}(x)=\tilde{\Phi}T_g(x)$, for $x \in X_g$. It follows that the restriction of $\tilde{\Phi}$ to the set $\bigcap_{g \in G} X_g$ is the desired point isomorphism (observe that since $G$ is countable, this intersection is a measurable set with full measure).
	\end{proof}
	We now give a proof of Theorem \ref{mainthm} which we state here again for the convenience of the reader:
	\begin{theorem}\label{mainthmproof} Let $G$ be a countable amenable group and let $(F_N) \subseteq G$ be a F\o lner sequence. Let $E \subseteq G$ with $\bar{d}_{(F_N)}(E)>0$ and let $\mathds{X}=(X,\mathcal{B},\mu,(T_g)_{g \in G})$ be a Furstenberg system for the pair $(E,(F_N))$. Then, there exists a subsequence $(F_{N_k})$ such that the system $\mathds{X}$ is isomorphic to the symbolic system $\mathds{Y}=(Y,\mathcal{B}_Y,\nu,(S_g)_{g \in G})$, where $Y=\overline{\{S_g\omega : g \in G\}}$, $\omega=(\mathbb{1}_E(g))_{g \in G}$, the maps $S_g$ are given by $(S_gx)_{g_0}=x_{gg_0}$ for all $g, g_0 \in G$,  $\nu=\textrm{w*-}\lim_{k \to \infty} \frac{1}{|F_{N_k}|}\sum_{g \in F_{N_k}} \delta_{S_g\omega}$, and $\mathcal{B}_Y$ is the completion of the Borel $\sigma$-algebra of $Y$\footnote{Note that since $\textrm{supp}(\nu) \subseteq \overline{\{S_g\omega : g \in G\}}$, we could take $Y=\{0,1\}^G$.}.
	\end{theorem}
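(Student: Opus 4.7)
The plan is to reduce the theorem to an application of Theorem~\ref{peterseniso} by exhibiting a Banach algebra isomorphism $\Phi: L^{\infty}(X,\mathcal{B},\mu) \to L^{\infty}(Y,\mathcal{B}_Y,\nu)$ that intertwines $T_g$ with $S_g$ and preserves integrals. First, by a diagonal extraction as in construction (i) of Section~2, I refine the subsequence $(F_{N_k})$ from Definition~\ref{fsystem}(2) so that the limit $d_{(F_{N_k})}(\bigcap_{i=1}^{r} g_i^{-1}E)$ exists for \emph{every} finite family $g_1,\dots,g_r \in G$, and then construct $\mathds{Y}$ from this refined subsequence. Setting $A' = \{y \in Y : y(e) = 1\}$, Definition~\ref{fsystem}(2) on the $\mathds{X}$ side and a direct calculation against the weak$^*$-defining cylinder sets on the $\mathds{Y}$ side together yield
\[
\mu\!\left(\bigcap_{i=1}^{r} (T_{g_i})^{-1}A\right) \;=\; d_{(F_{N_k})}\!\left(\bigcap_{i=1}^{r} g_i^{-1}E\right) \;=\; \nu\!\left(\bigcap_{i=1}^{r} (S_{g_i})^{-1}A'\right),
\]
so the joint distributions of the generating families $\{(T_g)^{-1}A : g \in G\}$ and $\{(S_g)^{-1}A' : g \in G\}$ coincide.

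Let $\mathcal{A}_X$ be the Boolean algebra generated by $\{(T_g)^{-1}A : g \in G\}$ inside $\mathcal{B}$, and let $\mathcal{A}_Y$ be the analogous Boolean algebra inside $\mathcal{B}_Y$. Extending the assignment $(T_g)^{-1}A \mapsto (S_g)^{-1}A'$ via finite unions, intersections, and complements yields a Boolean algebra homomorphism $\Psi_0: \mathcal{A}_X \to \mathcal{A}_Y$. Writing any element of $\mathcal{A}_X$ as a finite disjoint union of atoms of the form $\bigcap_{i \in I}(T_{g_i})^{-1}A \cap \bigcap_{j \in J}((T_{g_j})^{-1}A)^c$, inclusion-exclusion and the previous display show $\mu(B) = \nu(\Psi_0(B))$ for every $B \in \mathcal{A}_X$; hence, modulo null sets, $\Psi_0$ is a measure-preserving injection. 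By condition (3) of Definition~\ref{fsystem}, $\mathcal{A}_X$ is dense in the measure algebra $(\mathcal{B}/\mu\text{-null}, d_\mu)$ endowed with the complete metric $d_\mu(B_1,B_2) = \mu(B_1 \triangle B_2)$, and the same density holds on the $\mathds{Y}$ side (the cylinders generate $\mathcal{B}_Y$). Therefore $\Psi_0$ extends by uniform continuity to a measure-preserving Boolean $\sigma$-algebra isomorphism $\Psi$ of the two measure algebras.

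The isomorphism $\Psi$ canonically lifts to a Banach algebra $*$-isomorphism $\Phi: L^{\infty}(X,\mathcal{B},\mu) \to L^{\infty}(Y,\mathcal{B}_Y,\nu)$ preserving integrals, obtained by first transporting simple functions and then extending by $L^{\infty}$-density. Equivariance needs to be checked only on the generators: for any $h, g_0 \in G$, the function $\mathbb{1}_{(T_{g_0})^{-1}A} \circ T_h = \mathbb{1}_{(T_{g_0 h})^{-1}A}$ is sent by $\Phi$ to $\mathbb{1}_{(S_{g_0 h})^{-1}A'} = \mathbb{1}_{(S_{g_0})^{-1}A'} \circ S_h = \Phi(\mathbb{1}_{(T_{g_0})^{-1}A}) \circ S_h$, which then propagates to all of $L^{\infty}$. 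Theorem~\ref{peterseniso} now furnishes the desired point isomorphism between $\mathds{X}$ and $\mathds{Y}$. I expect the main obstacle to be the extension step from $\Psi_0$ to $\Psi$: the countable subalgebras $\mathcal{A}_X, \mathcal{A}_Y$ are in general far from containing all measurable sets, and the extension crucially exploits the generation hypothesis (3) in Definition~\ref{fsystem} together with the completeness of the measure algebra as a metric space, without which the joint-distribution coincidence established in the first paragraph would be insufficient to identify the two systems up to isomorphism.
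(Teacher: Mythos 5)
Your proposal is correct, but it takes a genuinely different route from the paper. The paper argues functional-analytically: it forms the unital $C^*$-algebra $\mathcal{A}_f \subseteq L^{\infty}(\mu)$ generated by the shifts of $\mathbb{1}_A$, passes to its Gelfand spectrum $Z$ to obtain a topological model $\mathds{Z}$ of $\mathds{X}$ (this is where Theorem \ref{peterseniso} is invoked), and then defines an explicit continuous injection $\Phi(\chi)=(\chi(T_gf))_{g \in G}$ of $Z$ into $\{0,1\}^G$, whose pushforward measure is identified with $\nu$ by essentially the same cylinder computation you perform at the start. You instead work purely measure-theoretically: you observe that the finite joint distributions of the generating families $\{(T_g)^{-1}A\}$ and $\{(S_g)^{-1}A'\}$ coincide, build a measure-preserving Boolean homomorphism on the generated finite Boolean algebras, extend it by density (using condition (3) of Definition \ref{fsystem} and completeness of the measure algebras) to a measure-algebra isomorphism, and lift it to $L^{\infty}$ so as to invoke Theorem \ref{peterseniso}. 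Both arguments are sound and both ultimately rest on von Neumann's point-realization theorem for standard spaces. Two remarks. First, your route is slightly circuitous: the proof of Theorem \ref{peterseniso} begins by recovering the measure-algebra isomorphism from the $L^{\infty}$ isomorphism, which is exactly the object you already hold, so you could apply Satz 1 of \cite{vonneumann} directly to $\Psi$ instead of lifting to $L^{\infty}$ and back; you must still verify equivariance of $\Psi$ on the generators, as you do. Second, your Boolean-algebra argument is tied to indicator functions and does not transfer to the Appendix's setting of finitely many $\D$-valued functions, where there is no generating partition; the paper's Gelfand-spectrum argument carries over verbatim there, which is presumably why the authors chose it. (A minor point: the extra diagonal extraction in your first paragraph is unnecessary, since Definition \ref{fsystem}(2) already asserts that the limits $d_{(F_{N_k})}\bigl(\bigcap_{i=1}^{r} g_i^{-1}E\bigr)$ exist for all finite families along the given subsequence.)
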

	\begin{proof}
		Let $f=\mathbb{1}_A$, and put $\mathcal{A}_f=\overline{\textrm{Span}_{\C}\{ \prod_{i=1}^r T_{g_i}f, \mathbb{1} : r \in \N, g_1,\dots,g_r \in G\}}^{L^{\infty}(\mu)}$, the unital $C^*$-algebra generated by products of shifts of $f$. Since $\mathds{X}$ is a Furstenberg system, it satisfies condition (3) in Definition \ref{fsystem}, which implies that $\mathcal{A}_f$ is dense in $L^{\infty}(\mathcal{B},\mu)$ with respect to the $L^1(\mu)$-norm.
		\\ \\
		Let $Z=\hat{\mathcal{A}_f}$. By Gelfand's representation theorem, there is an isometric isomorphism $\Gamma: \mathcal{A}_f \rightarrow C(Z)$. Notice $Z$ is compact and metric given that $\mathcal{A}_f$ is separable. The measure $\mu$ induces a positive linear functional $F: C(Z) \rightarrow \C$ via $F(\varphi)=\mu(\Gamma^{-1}(\varphi))$, where $\varphi \in C(Z)$. Let $\tilde{\mu}$ be the measure obtained from $F$ via Riesz's representation theorem.  Each measure preserving transformation $T_g, g \in G$, induces an algebra isomorphism of $\mathcal{A}_f$ (via precomposition), which in turn, induces an isomorphism of $C(Z)$ via $\Gamma$. A theorem of Banach ensures that this isomorphism of $C(Z)$ is given by a homeomorphism of $Z$, which we will denote by $\tilde{T_g},g \in G$. Observe that the homeomorphism $\tilde{T_g}, g \in G$ is $\tilde{\mu}$-preserving. We claim that the system $\mathds{X}=(X,\mathcal{B},\mu,(T_g)_{g \in G})$ is isomorphic to the system $\mathds{Z}=(Z,\mathcal{B}_Z,\tilde{\mu},(\tilde{T_g})_{g \in G})$.
		\\ \\
		Note that since $\mathcal{A}_f$ is dense, with respect to the $L^1(\mu)$-norm, in $L^{\infty}(\mathcal{B},\mu)$, and since $C(Z)$ is dense, with respect to the $L^1(\tilde{\mu})$-norm, in $L^{\infty}(Z,\mathcal{B}_Z,\tilde{\mu})$, we can extend $\Gamma$ by continuity to a Banach algebra isomorphism $\tilde{\Gamma}$ between $L^{\infty}(X,\mathcal{B},\mu)$ and $L^{\infty}(Z,\mathcal{B}_Z,\tilde{\mu})$. Moreover, it is not hard to see that the map $\tilde{\Gamma}$ satisfies
		\[ \tilde{\Gamma}(f \circ T_g)=\tilde{\Gamma}(f) \circ \tilde{T}_g \quad \textrm{ and } \int_Z \tilde{\Gamma}(f) \ d\tilde{\mu}=\int_X f \ d\mu \textrm{ for all } g \in G, f \in L^{\infty}(X,\mathcal{B},\mu).\]
		(Indeed, it is enough to check that the above equalities  hold for functions of the form $\prod_{i=1}^r T_{g_i}f$, with $r \in \N$ and $g_1,\dots,g_r \in G$). 
		\\ \\
		Thus, by Theorem \ref{peterseniso}, there exists an isomorphism between the measure preserving systems $\mathds{X}$ and $\mathds{Z}$. From this point on we are going to keep working with the system $\mathds{Z}$.
		\\ \\
		Define a map $\Phi$ from $Z$ to the compact space $\{0,1\}^G$ by $\Phi(\chi)=(\chi(T_gf))_{g \in G}$. Note that $\Phi$ is well defined. Indeed, since $\chi \in Z$, and $T_gf$ is idempotent for all $g \in G$, it follows that $\chi(T_gf) \in \{0,1\}$. 
		\\ \\
		Next, we show that $\Phi$ is injective. Suppose that $\Phi(\chi_1)=\Phi(\chi_2)$. Then, $\chi_1(T_gf)=\chi_2(T_gf)$ for all $g \in G$. Since $\chi_1,\chi_2$ are multiplicative, we have $\chi_1\left(\prod_{i=1}^r T_{g_i}f\right)=\chi_2\left(\prod_{i=1}^r T_{g_i}f\right)$ for all $r \in \N$, and all $g_1,\dots,g_r \in G$. Since $\chi_1,\chi_2$ are linear, we see that $\chi_1(g)=\chi_2(g)$ for any $g \in \textrm{Span}_{\C}\left\{ \prod_{i=1}^r T_{g_i}f : r \in \N, g_i \in G\right\}$. Finally, since $\chi_1,\chi_2$ are continuous, it follows that $\chi_1(g)=\chi_2(g)$ for any $g \in \mathcal{A}_f$, whence $\chi_1=\chi_2$, which implies that $\Phi$ is injective.
		\\ \\
		Finally, the map $\Phi$ is continuous. Indeed, for each $g \in G$, the map $\chi \mapsto \chi(T_gf)$ is an evaluation map, and as such, is continuous with respect to the weak* topology for all $g \in G$. This implies that $\Phi$ is continuous (we consider $\{0,1\}^G$ with respect to its natural product topology). Notice that, in particular, this means that $\Phi$ is measurable.
		\\ \\
		Now, let $\nu=\Phi_{*}\mu$. The map $\Phi$ provides a measurable isomorphism between the measure preserving systems $\mathds{Z}$ and $(\{0,1\}^G,\mathcal{B}_{\{0,1\}^G},\nu,(S_g)_{g \in G})$, where $S_g, g \in G$ denote the shift maps. Letting $B=\{x \in \{0,1\}^G: x(e)=1\}$, we see that the probability measure $\nu$ is determined by the values
		\begin{equation}
		\nu((T_{g_1})^{-1}B \cap \dotso \cap (T_{g_r})^{-1}B), \textrm{ for all } r \in \N, \ g_1,\dots,g_r \in G.
		\end{equation}
		We have
		\begin{multline}\label{densityormean}
		\nu((T_{g_1})^{-1}B \cap \dotso \cap (T_{g_r})^{-1}B)=\int_Y \mathbb{1}_{(T_{g_1})^{-1}B \cap \dotso \cap (T_{g_r})^{-1}B} \ d\Phi_{*}\tilde{\mu}=\\ \int_Z \mathbb{1}_{(T_{g_1})^{-1}B \cap \dotso \cap (T_{g_r})^{-1}B}(\Phi(\chi)) \ d\tilde{\mu}(\chi)=\\ \int_Z \mathbb{1}_{(T_{g_1})^{-1}B \cap \dotso \cap (T_{g_r})^{-1}B}((\chi(T_gf))_{g \in G}) \ d\tilde{\mu}(\chi)= \int_Z \chi(T_{g_1}f \cdot \dotso \cdot T_{g_r}f) \ d\tilde{\mu}(\chi)=\\ \int_Z \Gamma(T_{g_1}f \cdot \dotso \cdot T_{g_r}f) \ d\tilde{\mu}(\chi)=\int_X T_{g_1}f \cdot \dotso \cdot T_{g_r}f \ d\mu= \\ \mu((T_{g_1})^{-1}A \cap \dots \cap (T_{g_r})^{-1}A)=d_{(F_{N_k})}(g_1^{-1}E \cap \dotso \cap g_r^{-1}E),
		\end{multline}
		where we used the definition of the Gelfand transform in the second to last step, and the fact that $\mathds{X}$ is a Furstenberg system in the last one. It follows that, for $\omega=(\mathbb{1}_E(g))_{g \in G}$, one has
		\begin{equation}\label{weak1}
		\nu=\textrm{w*}\lim_{k \to \infty} \frac{1}{|F_{N_k}|}\sum_{g \in F_{N_k}} \delta_{S_g\omega},
		\end{equation}
		whence $\textrm{supp}(\nu) \subseteq \overline{\{ S_g\omega : g \in G\}}$. Notice that a reasoning as in (i) of Section 2 shows that the right hand side in \eqref{weak1} is well defined as a weak* limit. We are done.
	\end{proof}
	Next, we make some comments on ergodicity of Furstenberg systems. In general, a Furstenberg system associated with a pair $(E,(F_N))$ will not be ergodic (this is for example the case for $(E=\bigcup_{n \in \N}[2^{2n},2^{2n+1}),F_N=[1,N])$.) Nonetheless, one can show (see \cite{bf}) that if for some F\o lner sequence $(F_N)$ the set $E$ satisfies $d_{(F_N)}(E)>0$, then there is a F\o lner sequence $(G_N)$ such that the Furstenberg system associated to $(E,(G_N))$ is ergodic.
	\\ \\
	The following corollary of Theorem \ref{mainthmproof} provides a characterization of pairs $(E,(F_N))$, $(D,(G_N))$ that have isomorphic Furstenberg systems:
	\begin{theorem}\label{setchar}
		Let $E, D$ be two subsets of $G$ and let $(F_N), (G_N)$ be F\o lner sequences such that $d_{(F_N)}(E)>0$ and $d_{(G_N)}(D)>0$.
		\begin{itemize}
			\item[(1)] If the Furstenberg systems associated with $(E,(F_N))$ and $(D,(G_N))$ are isomorphic, then, for all $r \in \N$ and all $g_1,\dots,g_r \in G$,
			\begin{equation}\label{setequivalence}
			d_{(F_N)}(g_1^{-1}E\cap \dots \cap g_r^{-1}E)=d_{(G_N)}(g_1^{-1}D \cap \dots \cap g_r^{-1}D).
			\end{equation}
			\item[(2)] Suppose that the pairs $(E,(F_N))$ and $(D,(G_N))$ satisfy \eqref{setequivalence}. Then, $(E,(F_N))$ and $(D,(G_N))$ admit isomorphic Furstenberg systems.
		\end{itemize}
	\end{theorem}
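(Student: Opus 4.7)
My plan is to reduce both parts, via Theorem \ref{mainthm}, to the symbolic Furstenberg systems built by approach (i) of Section 2, since for those the invariant measure is explicitly read off from density counts of cylinders in $\{0,1\}^G$. An isomorphism of Furstenberg systems will be taken to carry the distinguished set $A_E$ to $A_D$ up to measure zero; this is natural in view of condition (3) of Definition \ref{fsystem} and the identification $A \mapsto \{x : x(e)=1\}$ furnished by Theorem \ref{mainthm}. Part (1) is then almost immediate: if $\mathds{X}_E$ and $\mathds{X}_D$ are isomorphic via some $\Psi$ with $\Psi(A_E) = A_D$, then
\[\mu_E\bigl((T_{g_1})^{-1}A_E \cap \dots \cap (T_{g_r})^{-1}A_E\bigr) = \mu_D\bigl((T_{g_1})^{-1}A_D \cap \dots \cap (T_{g_r})^{-1}A_D\bigr),\]
and applying condition (2) of Definition \ref{fsystem} to each side yields \eqref{setequivalence} (using that whenever $d_{(F_N)}$ exists it agrees with the subsequence density $d_{(F_{N_k})}$ fixed in the construction).

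For part (2), I would construct the symbolic Furstenberg systems $\mathds{Y}_E$ and $\mathds{Y}_D$ on $Y = \{0,1\}^G$ via approach (i). The measure $\nu_E$ is determined by its values on cylinders $C = \{x : x(h_i)=\varepsilon_i,\ i=1,\dots,r\}$, and $\nu_E(C)$ equals the density along the chosen F\o lner subsequence of $\bigcap_{i : \varepsilon_i = 1} h_i^{-1}E \cap \bigcap_{i : \varepsilon_i = 0} h_i^{-1}E^c$. A finite inclusion--exclusion rewrites this quantity as an alternating sum of ``positive'' densities $d_{(F_N)}\bigl(\bigcap_{j \in S} h_j^{-1}E\bigr)$, each of which, by the hypothesis \eqref{setequivalence}, equals the corresponding density for $D$. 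Hence $\nu_E(C) = \nu_D(C)$ for every cylinder, so $\nu_E = \nu_D$ on $\mathcal{B}_Y$, and the two symbolic Furstenberg systems coincide as measure preserving systems (sharing also the distinguished set $\{x : x(e)=1\}$). Theorem \ref{mainthm} now produces Furstenberg systems for $(E,(F_N))$ and $(D,(G_N))$ each isomorphic to this common symbolic system, hence to each other.

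The step I expect to require the most care is the inclusion--exclusion in part (2): one must verify that the densities of the general cylinder sets (those involving complements) actually exist along the same F\o lner subsequences used to define $\nu_E$ and $\nu_D$, which should follow from the existence of the positive intersection densities in \eqref{setequivalence}, possibly after a standard diagonal extraction of a common subsequence. A secondary subtlety, in part (1), is to justify that the hypothesized isomorphism really does send $A_E$ to $A_D$; this is natural in light of condition (3), but worth recording explicitly.
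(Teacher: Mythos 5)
Your argument is essentially the paper's: both directions are reduced, via Theorem \ref{mainthm}, to the symbolic systems on $\{0,1\}^G$, whose shift-invariant measures are determined by the values $d_{(F_N)}(g_1^{-1}E\cap\dots\cap g_r^{-1}E)$, so \eqref{setequivalence} forces the two symbolic measures to coincide (your inclusion--exclusion for cylinders involving complements is just a more explicit form of this determination, and the existence of those cylinder densities is automatic since they are finite linear combinations of the positive intersection densities). The one point you flag in part (1) --- that the isomorphism must carry $A_E$ to $A_D$ --- is likewise left implicit in the paper, which in effect treats the distinguished set $A$ as part of the data of a Furstenberg system (as condition (3) of Definition \ref{fsystem} suggests); making that convention explicit is the right way to close the loop, since an arbitrary metric isomorphism of the underlying systems would not by itself yield \eqref{setequivalence}.
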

	\begin{proof}
		(1) Let us assume that the pairs $(E,(F_N))$ and $(D,(G_N))$ satisfy \eqref{setequivalence}. By Theorem \ref{mainthmproof} the sets $E$ and $D$ have each a Furstenberg system of the form $(Y,\mathcal{B}_Y,\nu_1,(S_g)_{g \in G})$ and $(Y,\mathcal{B}_Y,\nu_2,(S_g)_{g \in G})$, where $Y=\{0,1\}^G$ and where, for $\omega_1=(\mathbb{1}_{E}(g))_{g \in G}$ and $\omega_2=(\mathbb{1}_{D}(g))_{g \in G}$, we put $\nu_1=\textrm{w*-}\lim_{N \to \infty} \frac{1}{|F_N|}\sum_{g \in F_N}\delta_{S_g\omega_1}$ and $\nu_2=\textrm{w*-}\lim_{N \to \infty} \frac{1}{|G_N|}\sum_{g \in G_N}\delta_{S_g\omega_2}$ (we use $Y=\{0,1\}^G$ instead of the corresponding orbital closures for convenience -see footnote 5).
		
		As we know from (i) in Section 2, the measures $\nu_1$ and $\nu_2$ are determined by their values at sets of the form $S_{g_1}^{-1}A \cap \dots \cap S_{g_r}^{-1}A$ for $r \in \N$, $g_1,\dots,g_r \in G$, where $A=\{x : x(e)=1\}$. The result in question follows now from \eqref{setequivalence}.
		\\ \\
		(2) If the pairs $(E,(F_N)$ and $(D,(G_N))$ admit isomorphic Furstenberg systems, say $\mathds{Y}_i=(Y,\mathcal{B}_Y,\nu_i,(S_g)_{g \in G})$, for $i=1,2$, it follows that $\nu_1((S_{g_1})^{-1}A \cap \dots \cap (S_{g_r})^{-1}A)=\nu_2( (S_{g_1})^{-1}A \cap \dots \cap (S_{g_r})^{-1}A)$, for all $r \in \N$ and $g_1,\dots,g_r \in G$, which implies, given the definition of $\nu_1, \nu_2$, that we can find F\o lner sequences $(F_N)$ and $(G_N)$ such that \eqref{setequivalence} holds, as desired.
	\end{proof}
	\section{A version of Theorem \ref{mainthm} when $G$ is uncountable}
	Since there are quite a few results of Ramsey-theoretical nature which are valid for uncountable amenable semigroups (see for example \cite{bl1}, \cite{hs} and \cite{dl}), it makes sense to consider a variant of Theorem \ref{mainthm} for general (discrete) amenable semigroups. In what follows, we will be assuming (without loss of generality) that $G$ has a neutral element. 
	\begin{definition}\label{fsystem2}
		Let $G$ be a left amenable semigroup. Let $m: \ell^{\infty}(G) \rightarrow \C$ be a left-invariant mean and let $E \subseteq G$ be such that $m(\mathbb{1}_E)>0$. We say that a measure preserving system $(X,\mathcal{B},\mu,(T_g))$ is a \emph{Furstenberg system} associated with the pair $(E,m)$ if
		\begin{itemize}
			\item[(1)] There is a set $A \in \mathcal{B}$ such that $\mu(A)=m(\mathbb{1}_E)>0$.
			\item[(2)] For all $r \in \N$ and all $g_1,\dots,g_r \in G$ we have $m(\mathbb{1}_{g_1^{-1}E} \cdot \dotso \cdot \mathbb{1}_{g_r^{-1}E})=\mu((T_{g_1})^{-1}A \cap \dots \cap (T_{g_r})^{-1}A)$.
			\item[(3)] The $\sigma$-algebra $\mathcal{B}$ is (the completion of) the $\sigma$-algebra generated by the family of sets $\{T_gA : g \in G\}$.
		\end{itemize}
	\end{definition}
	Next we describe a ``symbolic'' Furstenberg system associated to a pair $(E,m)$.
	\begin{lemma}\label{seqspace2}
		Let $G$ be a left amenable semigroup and let $E \subseteq G$ be such that for some left-invariant mean $m$ on $G$ we have $m(\mathbb{1}_E)>0$. Put $X=\{0,1\}^G$ and let $A=\{x : x(e)=1\}$. Then, the system $\mathds{X}=(X,\mathcal{B}_X,\mu,(S_g)_{g \in G})$, where $S_g$ is given by $(S_gx)_{g_0}=x_{gg_0}$ for all $g, g_0 \in G$, and $\mu$ is a (unique) measure satisfying
		\begin{equation}\label{sw1}
		\mu((S_{g_1})^{-1} A \cap \dots \cap (S_{g_r})^{-1}A)=m(\mathbb{1}_{g_1^{-1}E} \cdot \dotso \cdot \mathbb{1}_{g_r^{-1}E})
		\end{equation}
		for all $r \in \N$ and $g_1,\dots,g_r \in G$, satisfies (1), (2) and (3) of Definition \ref{fsystem2}.  
	\end{lemma}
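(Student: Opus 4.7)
The plan is to follow construction (iv) of Section 2, adapted to the present setting where $G$ may be uncountable. On the algebra $\mathcal{A}$ of cylinder sets of $X = \{0,1\}^G$, I would define a set function $\lambda$ by
\[
\lambda\bigl(\{x \in X : x(h_i) = \varepsilon_i,\ i = 1,\dots,r\}\bigr) = m\Bigl(\prod_{i=1}^r \mathbb{1}_{h_i^{-1} E_i}\Bigr),
\]
where $h_1,\dots,h_r \in G$ are distinct, $\varepsilon_i \in \{0,1\}$, and $E_i = E$ if $\varepsilon_i = 1$, $E_i = E^c$ otherwise. Well-definedness of $\lambda$ (invariance under adjoining a redundant coordinate, which partitions the cylinder into the two cases $\varepsilon \in \{0,1\}$) and finite additivity on $\mathcal{A}$ both follow at once from the identity $\mathbb{1}_{h^{-1} E} + \mathbb{1}_{h^{-1} E^c} = \mathbb{1}$ together with linearity of $m$.

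The next step is to upgrade $\lambda$ to a measure. Since $X = \{0,1\}^G$ is compact Hausdorff in the product topology and each cylinder is clopen, any countable cover of a cylinder by cylinders admits a finite subcover; hence any countable disjoint decomposition of a cylinder into cylinders in $\mathcal{A}$ is actually finite, so $\sigma$-additivity of $\lambda$ on $\mathcal{A}$ follows from finite additivity. Carath\'eodory's extension theorem then produces a unique probability measure $\mu$ on $\sigma(\mathcal{A})$, which I would complete to a measure on $\mathcal{B}_X$.

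Shift-invariance of $\mu$ is then checked on cylinders. For $C = \{x : x(h_i) = \varepsilon_i\}$, the preimage $S_g^{-1} C$ is a cylinder whose coordinates are the translates of the $h_i$ by $g$, and the equality $\lambda(S_g^{-1}C) = \lambda(C)$ reduces, after recognizing the resulting product as ${}_g\bigl(\prod_{i=1}^r \mathbb{1}_{h_i^{-1} E_i}\bigr)$, to the defining left-invariance $m({}_g f) = m(f)$. Uniqueness of the Carath\'eodory extension propagates invariance from $\mathcal{A}$ to all of $\mathcal{B}_X$, making each $S_g$ into a $\mu$-preserving transformation.

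Finally, with $A = \{x : x(e) = 1\}$, items (1) and (2) of Definition \ref{fsystem2} are direct instances of \eqref{sw1} applied respectively to $A$ and to the cylinders $\bigcap_i (S_{g_i})^{-1} A = \{x : x(g_i) = 1\}$. For item (3), $(S_g)^{-1} A = \{x : x(g) = 1\}$, so finite intersections of these sets and their complements yield every cylinder; the cylinders generate $\sigma(\mathcal{A})$, which after completion coincides with $\mathcal{B}_X$. Uniqueness of $\mu$ also follows, since any measure satisfying \eqref{sw1} is determined on the generating algebra $\mathcal{A}$. The step I expect to require the most care is the $\sigma$-additivity/extension of $\lambda$, since for uncountable $G$ the space $X$ is non-metrizable and one cannot appeal to the weak-$*$ compactness of probability measures used in constructions (i)--(iv) of Section 2 in the countable case; the clopen-cylinder compactness argument above is the natural substitute.
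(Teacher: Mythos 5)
Your proof is correct and follows essentially the same route as the paper: both carry out construction (iv) of Section 2 for a general (possibly uncountable) $G$, defining the premeasure on cylinders via $m$ and checking invariance and properties (1)--(3) in the same way. The only differences are that the paper delegates the existence of the extension to the proof of Theorem 2.1 in \cite{bmc} (where you supply the clopen-cylinder compactness argument explicitly), and establishes uniqueness via Stone--Weierstrass density in $C(X)$ rather than via determination of the premeasure on the generating algebra of cylinders; both mechanisms work.
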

	\begin{proof}
		We start by noting that $\mu$ is well defined (this follows, for example, from the proof of Theorem 2.1 in \cite{bmc}, given that countability of $G$ is not used there).
		Now we observe that uniqueness of $\mu$ follows from the Stone-Weierstrass theorem. Indeed, the unital $\C$-algebra generated by the functions $\{ \mathbb{1}_{(S_g)^{-1}A} : g \in G\}$ clearly separates points of $X$ and is closed under conjugation. Thus, by Stone-Weierstrass, it is dense in $C(X)$, and it follows that the values in \eqref{sw1} determine $\mu$.
		The system $\mathds{X}$ clearly satisfies (1) and (2) of Definition \ref{fsystem2} by \eqref{sw1}. Moreover, the $G$-invariant $\sigma$-algebra generated by $A$ contains $\textrm{Borel}(X)$, whence $\mathds{X}$ satisfies (3), completing the proof.
	\end{proof}
	The following variant of Theorem \ref{peterseniso} will be needed for the proof of Theorem \ref{mainthm2} below.
	\begin{theorem}[cf. Remark 4.1 \cite{petersen}]\label{peterseniso2}
		Let $(X,\mathcal{B},\mu,(T_g)_{g \in G})$ and $(Y,\mathcal{C},\nu,(S_g)_{g \in G})$ be two measure preserving systems. Suppose that we can find a Banach algebra isomorphism $\Phi: L^{\infty}(X,\mathcal{B},\mu) \rightarrow L^{\infty}(Y,\mathcal{C},\nu)$ satisfying 
		\[ \Phi(f \circ T_g)=\Phi(f) \circ S_g \quad \textrm{ and } \int_Y \Phi(f) \ d\nu=\int_X f \ d\mu \textrm{ for all } g \in G, f \in L^{\infty}(X,\mathcal{B},\mu).\]
		Then, $(T_g)_{g \in G}$ and $(S_g)_{g \in G}$ are conjugate (see Definition 2.5 in \cite{walters}).
	\end{theorem}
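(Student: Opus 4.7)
The plan is to replay the proof of Theorem~\ref{peterseniso} but stop one step earlier: instead of trying to upgrade the measure-algebra isomorphism to a point isomorphism via von~Neumann's theorem (Satz~1 of \cite{vonneumann}), I would simply record the measure-algebra isomorphism, which is exactly what a conjugacy is. This is the right thing to do because the obstacle that forces us to drop to conjugacy when $G$ is uncountable appears precisely at the pointwise lifting step.

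First, I would extract from $\Phi$ a measure-preserving Boolean isomorphism $\tilde{\Phi}$ between the measure algebras $(\mathcal{B}, \mu)$ and $(\mathcal{C}, \nu)$. Since $\Phi$ is a Banach algebra isomorphism, it sends idempotents of $L^{\infty}(X,\mathcal{B},\mu)$ (which are precisely classes of indicators $\mathbb{1}_B$, $B \in \mathcal{B}$) bijectively to idempotents of $L^{\infty}(Y,\mathcal{C},\nu)$. This defines $\tilde{\Phi}[B] = [B']$ where $\Phi(\mathbb{1}_B) = \mathbb{1}_{B'}$. The multiplicativity of $\Phi$ gives $\tilde{\Phi}(B_1 \cap B_2) = \tilde{\Phi}(B_1) \cap \tilde{\Phi}(B_2)$; together with $\Phi(\mathbb{1}) = \mathbb{1}$ and linearity applied to $\mathbb{1}_{B^c} = \mathbb{1} - \mathbb{1}_B$ this yields preservation of complements, and unions then follow from $\mathbb{1}_{B_1 \cup B_2} = \mathbb{1}_{B_1} + \mathbb{1}_{B_2} - \mathbb{1}_{B_1}\mathbb{1}_{B_2}$. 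The integral-preservation hypothesis applied to $f = \mathbb{1}_B$ gives $\nu(\tilde{\Phi}(B)) = \mu(B)$, so $\tilde{\Phi}$ is measure-preserving. This is essentially the argument sketched in Remark~4.1 of \cite{petersen}, and it uses no countability assumption on $G$.

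Second, I would invoke the equivariance of $\Phi$. For each $g \in G$ the hypothesis $\Phi(\mathbb{1}_B \circ T_g) = \Phi(\mathbb{1}_B) \circ S_g$ becomes, at the level of indicators, $\mathbb{1}_{\tilde{\Phi}(T_g^{-1}B)} = \mathbb{1}_{S_g^{-1}\tilde{\Phi}(B)}$ $\nu$-a.e., so $\tilde{\Phi} \circ (T_g)_{*} = (S_g)_{*} \circ \tilde{\Phi}$ as Boolean transformations of the measure algebras. Thus $\tilde{\Phi}$ simultaneously conjugates the induced actions of the whole family $(T_g)_{g \in G}$ to $(S_g)_{g \in G}$, which is exactly the definition of conjugacy of measure-preserving actions in Definition~2.5 of \cite{walters}.

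I do not anticipate a genuine obstacle in this argument; the whole point of the statement is that by weakening the conclusion from metric isomorphism to conjugacy we bypass the only countability-dependent step in the proof of Theorem~\ref{peterseniso}, namely taking the intersection $\bigcap_{g \in G} X_g$ of the full-measure sets on which $S_g \tilde{\Phi} = \tilde{\Phi} T_g$ holds pointwise. Care should however be taken to record that each step above (well-definedness of $\tilde{\Phi}$, its being a Boolean homomorphism, measure preservation, and equivariance) holds as an \emph{almost-everywhere} identity of idempotents in $L^{\infty}$, which is precisely the data needed for a conjugacy.
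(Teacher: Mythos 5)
Your argument is correct and is essentially the paper's own proof: the paper likewise extracts the measure-preserving Boolean isomorphism of measure algebras from $\Phi$ (following the first part of the proof of Theorem~\ref{peterseniso}, i.e.\ Remark~4.1 of \cite{petersen}) and stops there, observing that this intertwining measure-algebra isomorphism is precisely conjugacy in the sense of Definition~2.5 of \cite{walters}. You have simply spelled out in more detail the idempotent argument that the paper delegates to the earlier proof and to the reference.
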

	\begin{proof}
		The argument goes along the lines of the first part of the proof of Theorem \ref{peterseniso}. Notice that we do not assume that $(X,\mathcal{B},\mu)$ or $(Y,\mathcal{C},\nu)$ are standard. This generality is compensated by the fact that the map $\Phi$ is just an algebra isomorphism, rather than a pointwise map.
	\end{proof}
	Here is finally a version of Theorem \ref{mainthm} for general amenable semigroups.
	\begin{theorem}\label{mainthm2}
		Let $G$ be a left amenable semigroup and let $m$ be a left-invariant mean. Let $E \subseteq G$ with $m(\mathbb{1}_E)>0$ and let $\mathds{X}=(X,\mathcal{B}_X,\mu,(T_g)_{g \in G})$ be a Furstenberg system for the pair $(E,m)$. Let $\mathds{Y}$ be the symbolic Furstenberg system for $(E,m)$ constructed in Lemma \ref{seqspace2}. Then, $(T_g)_{g \in G}$ and $(S_g)_{g \in G}$ are conjugate.
	\end{theorem}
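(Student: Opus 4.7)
The plan is to mirror the proof of Theorem \ref{mainthmproof} as closely as possible, with one essential substitution: the final appeal to Theorem \ref{peterseniso} is replaced by Theorem \ref{peterseniso2}. The latter's weaker conclusion---conjugacy rather than pointwise measurable isomorphism---matches exactly what is asked here and, crucially, does not require that the measure spaces be standard or that $G$ be countable. Concretely, I will build a Banach algebra isomorphism between $L^{\infty}(\mathds{X})$ and $L^{\infty}(\mathds{Y})$ that intertwines the two actions and preserves integrals, and then invoke Theorem \ref{peterseniso2}.

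Let $f=\mathbb{1}_A$ and $B=\{x\in\{0,1\}^G:x(e)=1\}$, and let $\mathcal{A}_f\subseteq L^{\infty}(\mathds{X})$ and $\mathcal{A}_B\subseteq L^{\infty}(\mathds{Y})$ be the $L^{\infty}$-closed unital $C^*$-subalgebras generated by $\{T_gf:g\in G\}$ and $\{S_g\mathbb{1}_B:g\in G\}$ respectively. By condition (3) of Definition \ref{fsystem2}, each is $L^1$-dense in the corresponding $L^{\infty}$ space. On the ``polynomial'' dense $*$-subalgebra of $\mathcal{A}_f$, define
\[ \Psi_{0}\Bigl(\sum_i c_i\prod_j T_{g_{ij}}f\Bigr)=\sum_i c_i\prod_j S_{g_{ij}}\mathbb{1}_B.\]
The only place where condition (2) of Definition \ref{fsystem2} enters is in verifying that $\Psi_{0}$ is well-defined: whether such a polynomial vanishes in $L^{\infty}(\mathds{X})$ depends only on which atoms of the finite Boolean algebra generated by the sets $T_{g_{ij}}^{-1}A$ have positive $\mu$-measure, and by inclusion--exclusion these atomic measures are polynomial expressions in the intersection measures $\mu\bigl(\bigcap_{g\in F}T_g^{-1}A\bigr)=m\bigl(\prod_{g\in F}\mathbb{1}_{g^{-1}E}\bigr)$, which agree with the corresponding expressions on the $\mathds{Y}$-side. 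Since the $C^*$-norm on a commutative $*$-algebra is algebraically determined (via the spectral radius), $\Psi_{0}$ is automatically an $L^{\infty}$-isometric $*$-isomorphism and extends to a $C^*$-algebra isomorphism $\Psi:\mathcal{A}_f\to\mathcal{A}_B$.

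To finish, one extends $\Psi$ to a Banach algebra isomorphism $\tilde{\Psi}:L^{\infty}(\mathds{X})\to L^{\infty}(\mathds{Y})$. The map $\Psi$ is also an $L^1$-isometry---this follows from condition (2) on generators, then by linearity and continuous functional calculus (note that $|\Psi(h)|=\Psi(|h|)$ since $\Psi$ is a $*$-isomorphism)---so it extends by $L^1$-continuity to an $L^1$-isometry between $L^1$ spaces. A truncation argument, applying the continuous functional calculus inside $\mathcal{A}_f$ to produce uniformly bounded approximants of any $h\in L^{\infty}(\mathds{X})$, shows that this extension carries $L^{\infty}(\mathds{X})$ onto $L^{\infty}(\mathds{Y})$ multiplicatively and isometrically. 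Since $\tilde{\Psi}$ commutes with the actions and preserves integrals on the generators---hence everywhere by continuity---Theorem \ref{peterseniso2} delivers the desired conjugacy of $(T_g)_{g\in G}$ and $(S_g)_{g\in G}$.

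The principal obstacle relative to the countable case is that the Gelfand spectrum of $\mathcal{A}_f$ need not be metrizable, so the pointwise almost-everywhere constructions used in the proof of Theorem \ref{mainthmproof}---which ultimately depend on a countable-intersection step in Theorem \ref{peterseniso}---are unavailable. The remedy is to work entirely at the Banach algebra and measure algebra level, which is exactly what Theorem \ref{peterseniso2} was tailored for; the price is the weaker conclusion of conjugacy, but this matches the statement of the theorem. The most delicate technical step is upgrading the $L^1$-isometric extension of $\Psi$ to an $L^{\infty}$-to-$L^{\infty}$ Banach algebra isomorphism, and this is precisely where one genuinely uses that $\mathcal{A}_f$ is a $C^*$-subalgebra (closed under continuous functional calculus) rather than merely a $*$-subalgebra.
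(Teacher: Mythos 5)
Your proof is correct, and it reaches the paper's conclusion by a genuinely different construction, although both arguments ultimately funnel through the same key lemma, Theorem \ref{peterseniso2}. The paper's proof of Theorem \ref{mainthm2} simply reruns the proof of Theorem \ref{mainthmproof}: it forms the Gelfand spectrum $Z=\hat{\mathcal{A}_f}$, uses Theorem \ref{peterseniso2} to pass from $\mathds{X}$ to the topological model $\mathds{Z}$, then embeds $Z$ into $\{0,1\}^G$ via the injective continuous character-evaluation map $\chi\mapsto(\chi(T_gf))_{g\in G}$, and finally identifies the pushforward measure with the measure of Lemma \ref{seqspace2} using the Stone--Weierstrass uniqueness statement and condition (2), with the mean $m$ replacing the densities $d_{(F_{N_k})}$. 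You instead skip the Gelfand/topological model entirely and build the intertwining, integral-preserving Banach algebra isomorphism directly between $L^{\infty}(\mathds{X})$ and $L^{\infty}(\mathds{Y})$ by matching generators $T_gf\mapsto S_g\mathbb{1}_B$. What your route buys is a single application of Theorem \ref{peterseniso2} and no need to worry about the (non-metrizable) spectrum or about identifying a pushforward measure; what it costs is that well-definedness of the generator-matching map is no longer automatic and must be checked by hand, which you do correctly via the atom/inclusion--exclusion argument (the essential supremum and the $L^1$-norm of a polynomial in the indicators are both determined by the measures of the atoms, which by condition (2) and Lemma \ref{seqspace2} coincide with the corresponding expressions in $m$ on both sides). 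Two small remarks: condition (2) is used not only for well-definedness but also for the integral-preservation hypothesis of Theorem \ref{peterseniso2}, as you note later; and the truncation step upgrading the $L^1$-extension to an $L^{\infty}$-to-$L^{\infty}$ algebra isomorphism is exactly the same (tersely treated) extension step that appears in the paper's proof of Theorem \ref{mainthmproof} when $\Gamma$ is extended to $\tilde{\Gamma}$, so you are not assuming anything the paper does not.
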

	\begin{proof}
		The proof is essentially the same as that of Theorem \ref{mainthmproof}. Only two changes have to be made. First, instead of applying Theorem \ref{peterseniso}, we need to make use of Theorem \ref{peterseniso2}, since $G$ is not assumed to be countable. Second, we have to replace $d_{(F_{N_k})}$ in the formula
		$\mu((T_{g_1})^{-1}A \cap \dots \cap (T_{g_r})^{-1}A)=d_{(F_{N_k})}(g_1^{-1}E \cap \dotso \cap g_r^{-1}E)
		\textrm{ (see equation \eqref{densityormean})}$
		with the mean $m$ and the measure $\nu$ in \eqref{weak1} by the measure $\nu$ obtained in Lemma \ref{seqspace2}.
	\end{proof}
	We conclude this section with the observation that an analog of Theorem \ref{setchar} for pairs $(E,m_1)$ and $(D,m_2)$ holds for a general amenable semigroup $G$. We omit the details.
	\section{From a dynamical system back to the group}
	The purpose of this short section is to prove the following partial converse to Theorem \ref{mainthm}.
	\begin{theorem}\label{ergodicback}
		Let $G$ be a countable amenable group.  Let $\mathds{X}=(X,\mathcal{B},\mu,(T_g)_{g \in G})$ be an ergodic measure preserving system. Let $A \in \mathcal{B}$ with $\mu(A)>0$ and let $(F_N)$ be a F\o lner sequence in $G$. Then there exists a subsequence $(F_{N_k})$ and a set $E \subseteq G$ such that 
		\begin{equation}\label{ergback}
		\mu((T_{g_1})^{-1}A \cap \dots \cap (T_{g_k})^{-1}A)=d_{(F_{N_k})}(g_1^{-1}E \cap \dots \cap g_k^{-1}E)
		\end{equation}
		for all $k \in \N$ and $g_1,\dots,g_k \in G$ (in particular, $\mu(A)=d_{(F_{N_k})}(E)$).
	\end{theorem}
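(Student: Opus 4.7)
The strategy is to realize $E$ as a return-time set for a generic point: choose $x_0 \in X$ such that all relevant ergodic averages along a subsequence of $(F_N)$ converge to the corresponding integrals, and set
\[ E := \{g \in G : T_g x_0 \in A\}. \]
The key identity is that $g \in g_i^{-1}E$ iff $T_{g_i g} x_0 = T_{g_i}(T_g x_0) \in A$ iff $T_g x_0 \in (T_{g_i})^{-1} A$. Hence, writing $f_{g_1,\dots,g_k} := \mathbf{1}_{(T_{g_1})^{-1} A \cap \dots \cap (T_{g_k})^{-1} A}$,
\[ \frac{|g_1^{-1} E \cap \dots \cap g_k^{-1} E \cap F_{N_k}|}{|F_{N_k}|} = \frac{1}{|F_{N_k}|} \sum_{g \in F_{N_k}} f_{g_1,\dots,g_k}(T_g x_0). \]
Thus \eqref{ergback} reduces to showing that, for an appropriate subsequence $(F_{N_k})$ and an appropriate $x_0$, the right-hand side converges to $\int f_{g_1,\dots,g_k}\, d\mu = \mu((T_{g_1})^{-1} A \cap \dots \cap (T_{g_k})^{-1} A)$ for every finite tuple $(g_1,\dots,g_k)$.

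To secure such $(F_{N_k})$ and $x_0$, I would invoke the Lindenstrauss pointwise ergodic theorem: every F\o lner sequence in a countable amenable group admits a tempered subsequence, and along such a subsequence the averages $\tfrac{1}{|F_{N_k}|}\sum_{g \in F_{N_k}} f(T_g x)$ converge to $\int f\, d\mu$ for $\mu$-a.e.\ $x$, for every $f \in L^1(\mu)$ (the ergodicity of $\mathds{X}$ is exactly what identifies the limit with the integral rather than a conditional expectation). Since $G$ is countable, the family $\{ f_{g_1,\dots,g_k} : k \in \mathbb{N},\, g_1,\dots,g_k \in G\}$ is countable, so the intersection of the associated full-measure convergence sets is itself of full $\mu$-measure. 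Pick any $x_0$ in this intersection and define $E$ as above; the special case $k=1,\, g_1 = e$ then yields $\mu(A) = d_{(F_{N_k})}(E)$, while the general case gives \eqref{ergback}.

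The main technical ingredient is thus Lindenstrauss's pointwise ergodic theorem for amenable group actions along tempered F\o lner sequences; the only genuinely delicate step is to obtain simultaneous a.e.\ convergence for countably many functions along a single subsequence, which is handled by countability of $G$ and the fact that a countable intersection of full-measure sets has full measure. If one prefers to avoid Lindenstrauss's theorem, one can instead start from the mean ergodic theorem along $(F_N)$ and carry out a diagonal extraction over the countable family $\{f_{g_1,\dots,g_k}\}$, producing a subsequence of $(F_N)$ along which all the relevant averages converge almost everywhere; the remainder of the argument is unchanged.
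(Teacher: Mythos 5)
Your proposal is correct and is essentially the paper's argument: define $E$ as the return-time set $\{g : T_g x_0 \in A\}$ of a suitably generic point $x_0$, and use the countability of the family of functions $\mathbb{1}_{(T_{g_1})^{-1}A \cap \dots \cap (T_{g_k})^{-1}A}$ to get simultaneous convergence of all the relevant averages. The paper takes precisely the fallback route you mention at the end --- von Neumann's mean ergodic theorem along $(F_N)$ followed by a diagonal extraction of a subsequence along which the countably many $L^2$-convergent averages also converge a.e.\ on a common full-measure set --- rather than invoking Lindenstrauss's pointwise ergodic theorem along a tempered subsequence; both yield the required generic point.
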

	\begin{proof} 
		Let $A \in \mathcal{B}$ with $\mu(A)>0$. Let $\mathcal{P}_f(G)$ denote the set of finite subsets of $G$. Put $F=\mathbb{1}_A$, and for every $\mathcal{G} \in \mathcal{P}_f(G)$, let $F_{\mathcal{G}}=\prod_{g \in \mathcal{G}} T_gF$. Note that the set $\{F_{\mathcal{G}} : \mathcal{G} \in \mathcal{P}_f(G) \}$ is countable. By von Neumann's mean ergodic theorem we have
		\begin{equation}
		\lim_{N \to \infty} \frac{1}{|F_N|}\sum_{g \in F_N} T_gF_{\mathcal{G}}=\int_X F_{\mathcal{G}} \ d\mu,
		\end{equation}
		for all $\mathcal{G} \in \mathcal{P}_f(G)$, where convergence takes place in the $L^2(\mu)$-norm. Since $\mathcal{P}_f(G)$ is countable, we can extract a subsequence $(F_{N_k})$ via a diagonal process and a subset $X_0 \subseteq X$ with $\mu(X_0)=1$ such that for all $x \in X_0$ and all $\mathcal{G} \in \mathcal{P}_f(G)$ we have
		\begin{equation}
		\lim_{k \to \infty} \frac{1}{|F_{N_k}|}\sum_{g \in F_{N_k}} F_{\mathcal{G}}(T_gx)=\int_X F_{\mathcal{G}} \ d\mu.
		\end{equation}
		Let $x \in X_0$, and let $E(x)=\{ g \in G : T_gx \in A\}$. Clearly, the set $E(x)$ satisfies \eqref{ergback}.
	\end{proof}
	\begin{corollary}
		Let $X_0$ be the subset of $X$ obtained in the proof of Theorem \ref{ergodicback}. For each $x \in X_0$, the Furstenberg system associated with $(E(x),(F_{N_k}))$ is isomorphic to a factor $\mathds{Z}_x$ of $\mathds{X}$ which can be identified with the completion of the $G$-invariant $\sigma$-algebra generated by the set $A$. Moreover, for any $x, y \in X_0$, the factors $\mathds{Z}_x$ and $\mathds{Z}_y$ are isomorphic.
	\end{corollary}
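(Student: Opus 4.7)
The plan is to recognise that $\nu_x$ in Theorem \ref{mainthmproof} does not depend on $x \in X_0$, and to realise the resulting symbolic system as a factor of $\mathds{X}$ via a natural equivariant map.

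First I would apply Theorem \ref{mainthmproof} to each pair $(E(x),(F_{N_k}))$. This yields, for each $x \in X_0$, a symbolic Furstenberg system $\mathds{Y}_x := (\{0,1\}^G, \mathcal{B}_Y, \nu_x, (S_g)_{g \in G})$ isomorphic to the Furstenberg system of $(E(x),(F_{N_k}))$, where $\nu_x = \textrm{w*-}\lim_{k} \tfrac{1}{|F_{N_k}|}\sum_{g \in F_{N_k}} \delta_{S_g \omega_x}$ and $\omega_x = (\mathbb{1}_{E(x)}(g))_{g \in G}$. Since $\nu_x$ is determined by its values on the cylinders $\bigcap_{i=1}^r (S_{g_i})^{-1}B$, $B = \{y : y(e)=1\}$, and since by \eqref{ergback} these values equal $\mu(\bigcap_{i=1}^r (T_{g_i})^{-1}A)$ independently of $x$, the measure $\nu_x$ is one and the same measure $\nu$ for every $x \in X_0$.

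Next I would define the map $\pi : X \to \{0,1\}^G$ by $\pi(x) = (\mathbb{1}_A(T_g x))_{g \in G}$. This map is measurable and $G$-equivariant in the sense appropriate to the anti-action $(S_g)$ as defined in Theorem \ref{mainthmproof}, and a direct computation of its pushforward on cylinders of the form $\bigcap_{i=1}^r (S_{g_i})^{-1}B$ recovers exactly the correlations $\mu(\bigcap_{i=1}^r (T_{g_i})^{-1}A)$, so $\pi_*\mu = \nu$. Hence $\pi$ realises the symbolic system $(\{0,1\}^G, \mathcal{B}_Y, \nu, (S_g)_{g \in G})$ as a factor of $\mathds{X}$. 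Moreover, the pullback $\sigma$-algebra $\pi^{-1}(\mathcal{B}_Y)$ is generated by the sets $\pi^{-1}\{y : y(g) = 1\} = (T_g)^{-1}A$, $g \in G$, so (after completion) it is precisely the completion of the $G$-invariant $\sigma$-algebra generated by $A$. Calling this factor $\mathds{Z}_x$, we obtain $\mathds{Z}_x \cong \mathds{Y}_x$ and hence $\mathds{Z}_x$ is isomorphic to the Furstenberg system for $(E(x),(F_{N_k}))$, as desired.

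Finally, the second assertion is immediate: since the target $(\{0,1\}^G, \mathcal{B}_Y, \nu, (S_g)_{g \in G})$ of $\pi$ is the same measure preserving system for every $x \in X_0$, the factors $\mathds{Z}_x$ and $\mathds{Z}_y$ are isomorphic for any $x,y \in X_0$. The only mildly delicate point is checking that $\pi^{-1}(\mathcal{B}_Y)$ really equals the completion of the $G$-invariant $\sigma$-algebra generated by $A$, but this follows from the fact that the evaluation maps $y \mapsto y(g)$ separate points of $\{0,1\}^G$ together with a standard monotone-class argument; no genuine obstacle arises.
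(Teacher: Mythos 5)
Your proof is correct; the paper states this corollary without proof, and your argument --- combining the uniqueness theorem with the observation that \eqref{ergback} forces $\nu_x$ to be independent of $x \in X_0$, together with the explicit factor map $\pi(x)=(\mathbb{1}_A(T_gx))_{g\in G}=\omega_x$ whose pullback $\sigma$-algebra is generated by $\{(T_g)^{-1}A : g\in G\}$ --- is exactly the intended one. The only point to watch is the order-of-composition convention for the shift $(S_g)$ versus the action $(T_g)$ when $G$ is non-abelian, but this is the same convention already implicit in the map $\Phi(\chi)=(\chi(T_gf))_{g\in G}$ in the proof of Theorem \ref{mainthmproof}, so no genuine gap arises.
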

	\appendix
	\section{Furstenberg systems that arise from finitely many functions}
	Let $G$ be a countable amenable group. The purpose of this section is to introduce Furstenberg systems associated with finite families of functions $f_1,\dots,f_{\ell} : G \rightarrow \D$, where $\D$ is the closed unit disk in $\C$, and to establish results analogous to those in Section 3.
	\begin{definition}
		Let $G$ be a countable amenable group. Let $\ell \in \N$ and $f_1,\dots,f_{\ell}: G \rightarrow \D$. Assume that there exists a F\o lner sequence $(F_N) \subseteq G$ and functions $a: G \rightarrow \R_{\geq 0}$ and $b : \N \rightarrow \R_{\geq 0}$ satisfying $\frac{1}{b(N)|F_N|}\sum_{g \in F_N}a(g) \to 1$ as $N \to \infty$. We will call such a triple $((F_N),a,b)$ an averaging scheme. We say that the family $\{f_1,\dots,f_{\ell}\}$ is \emph{accordant with the averaging scheme}  $((F_N),a,b)$ if for any choice of $\tilde{f_i} \in \{f_1,\dots,f_{\ell},\bar{f_1},\dots,\bar{f_{\ell}}\}$ the limit
		\begin{equation}
		\lim_{N \to \infty} \frac{1}{b(N)|F_N|}\sum_{g \in F_N} a(g)\prod_{i=1}^r \tilde{f_i}(g_ig)
		\end{equation}
		exists for all $r \in \N$, $g_1,\dots,g_r \in G$. 
	\end{definition}
	We are in a position to define Furstenberg systems associated to a family of functions $\{f_1,\dots,f_{\ell}\}$ accordant with the averaging scheme $((F_N),a,b)$:
	\begin{definition}\label{fsystem3}
		Let $G$ be a countable amenable group. Let $((F_N),a,b)$ be an averaging scheme. Let $\ell \in \N$ and let $f_1,\dots,f_{\ell} :G \rightarrow \D$ be a family of functions accordant with $((F_N),a,b)$. We say that a standard measure preserving system $(X,\mathcal{B},\mu,(T_g))$ is a \emph{Furstenberg system} for the tuple $(\{f_1,\dots,f_{\ell}\},(F_N),a,b)$ if there are functions $F_1,\dots,F_{\ell} \in L^{\infty}(\mu)$ such that for any $r \in \N$ and any $g_1,\dots,g_r \in G$ and for any choice of $\tilde{f_i} \in \{ f_1,\dots,f_{\ell},\bar{f_1},\dots,\bar{f_{\ell}}\}$ and $\tilde{F_i} \in \{F_1,\dots,F_{\ell},\bar{F_1},\dots,\bar{F_{\ell}}\}$ (where the choice of $\tilde{f_i}$ and $\tilde{F_i}$ is made in a ``simultaneous'' manner) the limit 
		\begin{equation}
		\lim_{N \to \infty} \frac{1}{b(N)|F_N|}\sum_{g \in F_N}a(g) \prod_{i=1}^r \tilde{f_i}(g_ig)=\int_X T_{g_1}\tilde{F_1} \cdot \dotso \cdot T_{g_r}\tilde{F_r} \ d\mu
		\end{equation}
		exists, and moreover, the $\sigma$-algebra $\mathcal{B}$ is equal to the completion of the $G$-invariant $\sigma$-algebra generated by the measurable functions $F_1,\dots,F_{\ell}$.
	\end{definition}
	As in Section 2, one can construct a symbolic Furstenberg system for the tuple $(\{f_1,\dots,f_{\ell}\},(F_N),a,b)$:
	\begin{lemma}\label{sequencespace2}
		Let $G$ be a countable amenable group. Let $((F_N),a,b)$ be an averaging scheme. Let $\{f_1,\dots,f_{\ell}\}$ be a family of functions accordant with $((F_N),a,b)$. Let $Y=(\D^{\ell})^G$. Put $\mathds{Y}=(Y,\mathcal{B}_Y,\nu,(S_g)_{g \in G})$, where for $\omega=(f_1(g),\dots,f_{\ell}(g))_{g \in G}$, $\nu=\textrm{w*-}\lim_{N \to \infty} \frac{1}{b(N)|F_N|} \sum_{g \in F_N} a(g)\delta_{S_g\omega}$, and $(S_gx)_{g_0}=(x_1(gg_0),\dots,x_{\ell}(gg_0))$ for all $g, g_0 \in G$ (we will show that this weak* limit is well defined). Then, $\mathds{Y}$ is a Furstenberg system for the tuple $(\{f_1,\dots,f_{\ell}\},(F_N),a,b)$.
	\end{lemma}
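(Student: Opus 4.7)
The plan is to extend the symbolic construction of item~(i) in Section~2 and Lemma~\ref{seqspace2} coordinatewise to the $\D^{\ell}$-valued setting. Take as candidate generating functions the coordinate evaluations at the neutral element of $G$:
\[ F_i(x) := x_i(e), \qquad x = (x_1(g),\dots,x_{\ell}(g))_{g \in G} \in Y, \quad 1 \leq i \leq \ell. \]
Each $F_i$ is continuous on the compact metrizable space $Y = (\D^{\ell})^G$ (metrizability uses that $G$ is countable) and bounded by $1$ in modulus. Under this choice, $(T_{g_i}F_j)(x)=F_j(S_{g_i}x)=x_j(g_i)$, so each product $\prod_{i=1}^r \tilde{F}_i \circ S_{g_i}$ is a cylinder function in $C(Y)$ whose value at $S_g\omega$ equals $\prod_{i=1}^r \tilde{f}_i(g_i g)$, exactly matching the integrand in the accordance hypothesis. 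The ``simultaneous'' pairing $\tilde{f}_i \leftrightarrow \tilde{F}_i$ of Definition~\ref{fsystem3} is the natural one: $f_j \leftrightarrow F_j$ and $\bar{f}_j \leftrightarrow \bar{F}_j$.

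First I would show that the weak* limit defining $\nu$ exists. Set $\mu_N := \frac{1}{b(N)|F_N|}\sum_{g \in F_N} a(g)\,\delta_{S_g\omega}$; by the averaging-scheme condition these are positive Borel measures with $\mu_N(Y) \to 1$, hence $\sup_N \|\mu_N\|_{TV} < \infty$. Let $\calf \subseteq C(Y)$ be the unital $\ast$-subalgebra generated by the coordinate functions $\{x \mapsto x_i(h) : 1 \leq i \leq \ell,\ h \in G\}$. Then $\calf$ separates points of $Y$, contains the constants, and is closed under conjugation, so by the complex Stone--Weierstrass theorem it is uniformly dense in $C(Y)$. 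For a monomial $\Psi = \prod_{i=1}^r \tilde{F}_i \circ S_{g_i} \in \calf$ we have
\[ \int_Y \Psi\, d\mu_N = \frac{1}{b(N)|F_N|} \sum_{g \in F_N} a(g)\, \prod_{i=1}^r \tilde{f}_i(g_i g), \]
which converges by the accordance hypothesis. By linearity this gives convergence on all of $\calf$, and a standard $\varepsilon/3$-argument, using density of $\calf$ in $C(Y)$ and the uniform bound on $\|\mu_N\|_{TV}$, extends convergence to every $\varphi \in C(Y)$, yielding a unique weak* limit $\nu$, a Borel probability measure on $Y$.

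Second, I would verify the three conditions of Definition~\ref{fsystem3}. Invariance of $\nu$ under each shift $S_h$ reduces by density to checking $\int(\Psi\circ S_h)\, d\nu = \int \Psi\, d\nu$ for $\Psi \in \calf$, which one obtains by comparing the two accordance limits produced before and after re-indexing the sum $\sum_{g \in F_N} a(g)\delta_{S_g\omega}$ by $h$ and using the left F\o lner property of $(F_N)$ to control the boundary error. The required integral identity
\[ \int_Y T_{g_1}\tilde{F}_1 \cdots T_{g_r}\tilde{F}_r\, d\nu = \lim_{N \to \infty} \frac{1}{b(N)|F_N|}\sum_{g \in F_N} a(g)\prod_{i=1}^r \tilde{f}_i(g_i g) \]
is then immediate from the weak* convergence $\mu_N \to \nu$ applied to the continuous monomial $\prod_{i=1}^r \tilde{F}_i \circ S_{g_i}$. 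Finally, $\{T_g F_i : g \in G,\ 1\leq i\leq \ell\}$ is precisely the family of coordinate projections of $Y = (\D^{\ell})^G$, which generates the product Borel $\sigma$-algebra of $Y$; completing it gives $\mathcal{B}_Y$, verifying property~(3).

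The main obstacle I anticipate is verifying shift invariance of $\nu$ in the presence of the general weights $a(g)$: the unweighted case $a\equiv b\equiv 1$ reduces cleanly to the F\o lner property, but for general $a$ one must show that after re-indexing the weighted sum by $h$ the discrepancy between $a(g)$ and $a(h^{-1}g)$ is absorbed in the limit, either by feeding it back into the accordance hypothesis applied to the shifted tuple $(hg_1,\dots,hg_r)$ or by noting that the linear functional $\varphi \mapsto \lim_N \int \varphi\, d\mu_N$ is automatically left-$G$-invariant on $\calf$ because of the form of the accordance averages. All other steps are straightforward coordinatewise extensions of the scalar constructions in item~(i) of Section~2 and Lemma~\ref{seqspace2}.
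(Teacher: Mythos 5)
Your construction coincides with the paper's: you take the coordinate evaluations $F_i(x)=x_i(e)$ as generating functions, observe that the unital $*$-algebra generated by their shifts is uniformly dense in $C(Y)$ by Stone--Weierstrass, use accordance to get convergence of $\int \Psi\, d\mu_N$ for every monomial $\Psi$, and then pass to all of $C(Y)$ via the uniform total-variation bound to obtain the weak* limit $\nu$, the moment identities, and the generation of $\mathcal{B}_Y$ by the coordinate projections. Up to that point your argument is correct and in fact more explicit than the paper's, which compresses both the existence of the limit and the determination of $\nu$ by its moments into two appeals to Stone--Weierstrass.

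The genuine gap is precisely at the step you flag, and neither of your two proposed repairs closes it. Applying accordance to the shifted tuple $(hg_1,\dots,hg_r)$ only shows that $\lim_N\int\Psi\circ S_h\, d\mu_N$ \emph{exists}; it gives no reason for it to equal $\lim_N\int\Psi\, d\mu_N$. And the limiting functional is \emph{not} automatically $G$-invariant on your algebra $\mathcal{F}$: take $G=\Z$, $F_N=\{1,\dots,N\}$, $a(n)=1+(-1)^n$, $b\equiv 1$ and $f_1(n)=(-1)^n$. This is an averaging scheme, and $\{f_1\}$ is accordant with it, since every limit $\lim_N\frac1N\sum_{n\le N}a(n)\prod_{i=1}^r f_1(n+m_i)=(-1)^{m_1+\cdots+m_r}$ exists; yet the resulting moment $\int F_1\circ S_m\, d\nu=(-1)^m$ depends on $m$, so $\nu$ is not shift-invariant, and indeed no measure-preserving system can realize these moments. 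To make invariance work one needs an extra asymptotic-invariance hypothesis on the weights, such as $\frac{1}{b(N)|F_N|}\sum_{g\in F_N}|a(hg)-a(g)|\to 0$ for every $h\in G$ (which holds for the logarithmic weights $a(n)=1/n$ that motivate the definition), combined with the F\o lner property as in your unweighted computation. Be aware that the paper's own proof of Lemma~\ref{sequencespace2} never verifies invariance of $\nu$ either, so in isolating this obstacle you have identified a hypothesis that is genuinely missing from the stated definition of an averaging scheme, not merely a routine verification you failed to carry out.
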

	\begin{proof} Let $Y=(\D^{\ell})^G$ and put $F_i=x_i(0) \in C(Y)$, so $F_i$ is measurable with respect to $\mathcal{B}_Y$. Then one easily checks that $\mathcal{C}(F_1,\dots,F_{\ell})$, the $\sigma$-algebra generated by the functions $F_1,\dots,F_{\ell},\bar{F_1},\dots,\bar{F_{\ell}}$, is equal to $\mathcal{B}_Y$ (since we tacitly consider $Y$ with the product topology and because, by the Stone-Weierstrass theorem, the subalgebra generated by these functions is dense in $C(Y)$). Invoking the Stone-Weierstrass theorem again, we see that the measure $\nu$ is determined by the values
		\[ \int_Y \prod_{i=1}^r T_{g_i}G_i \ d \rho \textrm{ for all } r \in \N, g_1,\dots,g_r \in G \textrm{ and } G_i \in \{F_1,\dots,F_{\ell},\bar{F_1},\dots,\bar{F_{\ell}}\}.  \]
		Let $\nu=\textrm{w*-}\lim_{N \to \infty} \frac{1}{b(N)|F_N|}\sum_{g \in F_N} a(g)\delta_{S_g\omega}$, where $\omega=(f_1(g),\dots,f_{\ell}(g))_{g \in G}$. Observe that $\nu$ is well defined because 
		\[ \int_Y \prod_{i=1}^r T_{g_i}\tilde{F_i} \ d\nu=\lim_{N \to \infty} \frac{1}{b(N)|F_N|}\sum_{g \in F_N} a(g) \prod_{i=1}^r \tilde{f_i}(g_ig) \textrm{ for all }r \in \N, g_1,\dots,g_r \in G,\]
		and $\tilde{F_i} \in \{F_1,\dots,F_{\ell},\bar{F_1},\dots,\bar{F_{\ell}}\}$ and $\tilde{f_i} \in \{f_1,\dots,f_{\ell},\bar{f_1},\dots,\bar{f_{\ell}}\}$, respectively. Thus, $\mathds{Y}$ is a Furstenberg system for the tuple $(\{f_1,\dots,f_{\ell}\},(F_N),a,b)$, as desired. One can also check that $\nu$ satisfies $\textrm{supp}(\nu) \subseteq \overline{\{T_g \omega : g \in G\}}$.
	\end{proof}
	We are now in a position to establish the ``functional'' analog of Theorem \ref{mainthmproof}:
	\begin{theorem}
		Let $G$ be a countable amenable group and let $((F_N),a,b)$ be an averaging scheme. Let $\{f_1,\dots,f_{\ell}\}$ be a family of bounded functions on $G$ that is accordant with $((F_N),a,b)$, and let $\mathds{X}=(X,\mathcal{B}_X,\mu,(T_g)_{g \in G})$ be a Furstenberg system for the tuple $(\{f_1,\dots,f_{\ell}\},(F_N),a,b)$. Then, $\mathds{X}$ is isomorphic to the Furstenberg system $\mathds{Y}=(Y,\mathcal{B}_Y,\nu,(S_g)_{g \in G})$ constructed in Lemma \ref{sequencespace2}.
	\end{theorem}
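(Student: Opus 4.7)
The plan is to follow the strategy of the proof of Theorem \ref{mainthmproof} with the scalar indicator $\mathbb{1}_A$ replaced by the finite family $F_1,\dots,F_\ell \in L^\infty(\mu)$. Let $\mathcal{A}$ denote the unital $C^*$-subalgebra of $L^\infty(X,\mathcal{B}_X,\mu)$ generated by $\{T_g F_i,\ T_g \bar{F_i} : g \in G,\ i = 1,\dots,\ell\}$. Since $G$ is countable, $\mathcal{A}$ is separable; and since $\mathcal{B}_X$ is the completion of the $G$-invariant $\sigma$-algebra generated by $F_1,\dots,F_\ell$, a routine approximation argument shows that $\mathcal{A}$ is dense in $L^\infty(\mu)$ with respect to the $L^1(\mu)$-norm. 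Apply Gelfand's representation theorem to obtain a compact metric space $Z = \hat{\mathcal{A}}$ and an isometric $*$-isomorphism $\Gamma : \mathcal{A} \to C(Z)$. Pulling back $\mu$ via $\Gamma^{-1}$ yields, by Riesz's theorem, a Borel probability measure $\tilde{\mu}$ on $Z$, and the $G$-action $T_g$ on $\mathcal{A}$ induces, via $\Gamma$ and Banach's theorem, $\tilde{\mu}$-preserving homeomorphisms $\tilde{T}_g$ of $Z$. Extending $\Gamma$ by $L^1$-continuity to a Banach algebra isomorphism $\tilde{\Gamma} : L^\infty(\mu) \to L^\infty(\tilde{\mu})$ that intertwines the $G$-actions and preserves integrals, Theorem \ref{peterseniso} delivers an isomorphism $\mathds{X} \cong \mathds{Z}$, where $\mathds{Z} = (Z, \mathcal{B}_Z, \tilde{\mu}, (\tilde{T}_g)_{g \in G})$.

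Next I would define the map $\Phi : Z \to (\D^\ell)^G$ by
\[
\Phi(\chi) = \bigl(\chi(T_g F_1), \dots, \chi(T_g F_\ell)\bigr)_{g \in G}.
\]
This is well defined since characters on a commutative unital $C^*$-algebra are contractive, giving $|\chi(T_g F_i)| \leq \|T_g F_i\|_\infty \leq 1$. The map $\Phi$ is continuous (each coordinate is a weak* evaluation), and it is injective: if $\Phi(\chi_1) = \Phi(\chi_2)$, then multiplicativity and the $*$-property of characters force $\chi_1$ and $\chi_2$ to agree on all products of shifts of $F_i$ and $\bar{F_i}$, and linearity plus norm-continuity extend the equality to all of $\mathcal{A}$. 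Since $Z$ is compact metric, $\Phi$ is a homeomorphism onto its (compact, hence Borel) image, and a direct check shows that $\Phi \circ \tilde{T}_g = S_g \circ \Phi$ because the dual action of $T_g$ on characters corresponds precisely to the coordinate shift on the symbolic sequences.

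It remains to identify $\Phi_* \tilde{\mu}$ with the symbolic measure $\nu$ from Lemma \ref{sequencespace2}. By the Stone-Weierstrass argument in that lemma, $\nu$ is determined by the integrals $\int_Y \prod_{i=1}^r T_{g_i} \tilde{F_i}\, d\nu$ as $r,\ g_i,\ \tilde{F_i} \in \{F_1,\dots,F_\ell,\bar{F_1},\dots,\bar{F_\ell}\}$ vary. Unwinding the definition of $\Phi$ through the Gelfand transform, these integrals against $\Phi_* \tilde{\mu}$ equal $\int_Z \Gamma\bigl(\prod_{i=1}^r T_{g_i} \tilde{F_i}\bigr)\, d\tilde{\mu} = \int_X \prod_{i=1}^r T_{g_i} \tilde{F_i}\, d\mu$, which in turn, by the Furstenberg-system property of $\mathds{X}$, coincide with the limiting averages $\lim_{N \to \infty} \frac{1}{b(N)|F_N|}\sum_{g \in F_N} a(g)\prod_{i=1}^r \tilde{f_i}(g_i g)$. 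The latter equal the corresponding integrals against $\nu$ by Lemma \ref{sequencespace2}, so $\Phi_* \tilde{\mu} = \nu$ and chaining $\mathds{X} \cong \mathds{Z} \cong \mathds{Y}$ gives the conclusion.

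The main technical obstacle I anticipate is the bookkeeping around the $*$-structure and the density step: the scalar proof dealt with a single idempotent $\mathbb{1}_A$, whereas here one must carry both $F_i$ and $\bar{F_i}$ through every stage, verifying that Gelfand's theorem, the Stone-Weierstrass density, and the $G$-equivariance all behave compatibly with complex conjugation, and that $\mathcal{A}$ is actually $L^1$-dense in $L^\infty(\mu)$ given that the generators are no longer indicators. Once those points are settled, the remainder of the argument is a faithful adaptation of the proof of Theorem \ref{mainthmproof}.
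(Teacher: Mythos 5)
Your proposal follows essentially the same route as the paper's proof: the same Gelfand-transform construction of $\mathds{Z}=\hat{\mathcal{A}}$, the same embedding $\Phi(\chi)=(\chi(T_gF_1),\dots,\chi(T_gF_\ell))_{g\in G}$ into $(\D^\ell)^G$, and the same Stone--Weierstrass identification of $\Phi_*\tilde{\mu}$ with $\nu$ via the limiting averages. The only cosmetic differences are that you justify $\chi(T_gF_i)\in\D$ by contractivity of characters where the paper uses positivity of $1-|F_i|^2$, and that you explicitly close the generating set under conjugation -- both of which are fine.
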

	\begin{proof}
		The proof of this Theorem is very similar to the proof of Theorem \ref{mainthmproof}, and so we are only going to point out the major changes that have to be implemented. The first major step of the proof is done in the same way, but for the algebra \\ $\mathcal{A}=\overline{\textrm{Span}\{ \prod_{i=1}^r T_{g_i}\tilde{F_i},  r \in \N, g_1,\dots,g_r \in G, \tilde{F_i} \in \{F_1,\dots,F_{\ell}\} \}}^{L^{\infty}(\mu)}$. Arguing as in the proof of Theorem \ref{mainthmproof} one obtains an isomorphism $\mathds{X} \cong \mathds{Z}$, where $\mathds{Z}=(Z,\mathcal{B}_Z,\tilde{\mu},(\tilde{T_g})_{g \in G})$ is constructed in the same way as in the proof of Theorem \ref{mainthm} (but now for the algebra $\mathcal{A}$). 
		Let $Y=(\D^{\ell})^G$. 
		Define a map $\Phi$ from $Z$ to the compact space $Y$ by $\Phi(\chi)= ((\chi(T_gF_1),\dots,\chi(T_gF_{\ell}))_{g \in G}$. Note that $\Phi$ is well defined. Indeed, since $\chi \in Z$, and $\mathcal{A}$ is a unital $C^*$-algebra, we have that $\chi$ is a positive linear functional that respects conjugation. Now, $F_i(X) \subseteq \D$, whence $1-|F_i|^2$ is a positive element (of $L^{\infty}(\tilde{\mu})$ and $\chi(1)=1$, which implies that $|\chi(F_i)|^2 \leq 1$, so $\chi(F_i) \in \D$, as claimed. 
		Finally, arguing again as in the proof of Theorem \ref{mainthmproof} we can see that $\Phi$ is injective and continuous. Notice that, in particular, this means that $\Phi$ is measurable.
		Now, let $\nu=\Phi_{*}\mu$. The map $\Phi$ provides a measurable isomorphism between the measure preserving system $\mathds{Z}$ and the system $\mathds{Y}$ which was constructed in Lemma \ref{sequencespace2}. Letting $H_i=x_i(e)$, we see (again by Stone-Weierstrass) that the probability measure $\nu$ is determined by the values
		\[
		\int_Y \prod_{j=1}^r S_{g_j}\tilde{H_j} \ d\nu, \textrm{ for all } r \in \N, \ g_1,\dots,g_r \in G, \tilde{H_j} \in \{H_1,\dots,H_{\ell},\bar{H_1},\dots,\bar{H_{\ell}}\}.
		\]
		\begin{multline*}
		\textrm{We have: } \hspace{1cm}\int_Y \prod_{j=1}^r S_{g_j}\tilde{H_j} \ d\nu=\int_Z \prod_{j=1}^r S_{g_j}\tilde{H_j}(\Phi(\chi)) \ d\tilde{\mu}(\chi)= \\\int_Z \prod_{j=1}^r S_{g_j}\tilde{H_j}((\chi(S_gF_1),\dots,\chi(S_gF_{\ell}))_{g \in G}) \ d\tilde{\mu}(\chi)=\int_Z \chi\left(\prod_{i=1}^r T_{g_i}\tilde{F_i}\right) \ d\tilde{\mu}(\chi)=\\ \int_Z \Gamma\left(\prod_{i=1}^r T_{g_i}\tilde{F_i}\right) \ d\tilde{\mu}(\chi)=\int_X \prod_{i=1}^r T_{g_i}\tilde{F_i} \ d\mu= \lim_{N \to \infty} \frac{1}{b(N)|F_N|}\sum_{g \in F_N} a(g) \prod_{i=1}^r \tilde{f_i}(g_ig),
		\end{multline*}
		where we used the definition of the Gelfand isomorphism $\Gamma: \mathcal{A} \rightarrow C(Z)$ in the second to last step, and the fact that $\mathds{X}$ is a Furstenberg system for the tuple $(\{f_1,\dots,f_{\ell}\},(F_N),a,b)$ in the last. It follows that for $\omega=((f_1(g),\dots,f_{\ell}(g)))_{g \in G}$,
		\begin{equation}\label{weak2}
		\nu=\textrm{w*}\lim_{k \to \infty} \frac{1}{b(N)|F_N|}\sum_{g \in F_N} a(g)\delta_{S_g\omega},
		\end{equation}
		whence $\textrm{supp}(\nu) \subseteq \overline{\{ S_g\omega : g \in G\}}$. Notice that the right hand side in \eqref{weak2} is well defined due to the assumption that $\{f_1,\dots,f_{\ell}\}$ is accordant with the averaging scheme $((F_N),a,b)$. We are done.
	\end{proof}
	Let $((F_N),a,b)$ be an averaging scheme. As was done in Section 5, we can also go back from a system an ergodic system $(X,\mathcal{B},\mu,(T_g)_{g \in G})$ and bounded measurable functions $F_1,\dots,F_{\ell}: X \rightarrow \C$ to families of bounded functions $\{f_1,\dots,f_{\ell}\}$ that are accordant with $((F_N),a,b)$:
	\begin{theorem}\label{ergodicbackAppendix}
		Let $G$ be a countable amenable group.  Let $(X,\mathcal{B},\mu,(T_g)_{g \in G})$ be an ergodic measure preserving system. Let $F_1,\dots,F_{\ell} \in L^{\infty}(\mu)$ and let $((F_N),a,b)$ be an averaging scheme. Then there exists a subsequence $(F_{N_k})$ and a family of $\C$-valued bounded functions $\{f_1,\dots,f_{\ell}\}$ such that
		\begin{equation}\label{mirrorfamily} \int_X \prod_{i=1}^r T_{g_i}\tilde{F_i} \ d\mu=\lim_{k \to \infty} \frac{1}{b(N_k)|F_{N_k}|}\sum_{g \in F_{N_k}} a(g)\prod_{i=1}^r \tilde{f_i}(g_ig),\end{equation}
		for all $r \in \N$ and $g_1,\dots,g_r \in G$, where $\tilde{F_i} \in \{F_1,\dots,F_{\ell},\bar{F_1},\dots,\bar{F_{\ell}}\}$ and $\tilde{f_i} \in \{f_1,\dots,f_{\ell},\bar{f_1},\dots,\bar{f_{\ell}}\}$ respectively. 
	\end{theorem}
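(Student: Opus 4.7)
The plan is to mirror the proof of Theorem \ref{ergodicback} in the functional setting. Enumerate the countable collection of products
\[
H = \prod_{i=1}^{r} T_{g_i}\tilde{F_i}, \qquad r \in \mathbb{N},\ g_1,\dots,g_r \in G,\ \tilde{F_i} \in \{F_1,\dots,F_\ell,\bar{F_1},\dots,\bar{F_\ell}\},
\]
as a sequence $(H_j)_{j \in \mathbb{N}}$. The strategy is to locate a single point $x \in X$ and a subsequence $(F_{N_k})$ of $(F_N)$ such that for every $j$ the weighted orbital sum $\frac{1}{b(N_k)|F_{N_k}|}\sum_{g \in F_{N_k}} a(g)\, H_j(T_g x)$ converges to $\int_X H_j\, d\mu$. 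Granted such an $x$, define $f_i(g) := F_i(T_g x)$; these are bounded by $\|F_i\|_{L^\infty(\mu)}$, and for any compatible choices of $\tilde{F_i}$ and $\tilde{f_i}$ one has
\[
\prod_{i=1}^{r} \tilde{f_i}(g_i g) = \prod_{i=1}^{r} \tilde{F_i}(T_{g_i g}\, x) = \left(\prod_{i=1}^{r} T_{g_i}\tilde{F_i}\right)(T_g x),
\]
so \eqref{mirrorfamily} follows at once from the chosen convergence.

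To produce such an $x$ and $(F_{N_k})$, the key analytic step is to establish, for every $H_j$, the weighted mean ergodic convergence
\[
\frac{1}{b(N)|F_N|}\sum_{g \in F_N} a(g)\, T_g H_j \;\longrightarrow\; \int_X H_j\, d\mu \quad \text{in } L^2(\mu).
\]
Splitting $H_j = \int H_j\, d\mu + H_j^{\perp}$, with $H_j^{\perp}$ orthogonal to the constants in $L^2(\mu)$, the constant piece produces $\int H_j\, d\mu \cdot \bigl(\frac{1}{b(N)|F_N|}\sum a(g)\bigr) \to \int H_j\, d\mu$ by the defining normalization of the averaging scheme, so it remains to show that the weighted averages of $H_j^{\perp}$ vanish in $L^2(\mu)$. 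Once this convergence is in hand, the standard fact that $L^2$-convergence upgrades to a.e.\ pointwise convergence along a subsequence, combined with a diagonal extraction over the countable family $(H_j)_j$, yields the desired $(F_{N_k})$ and a $\mu$-full set $X_0 \subseteq X$ on which pointwise convergence holds simultaneously for every $H_j$. Any $x \in X_0$ then completes the construction of the $f_i$.

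The main obstacle is the weighted mean ergodic step above. In the unweighted case $a \equiv 1$, $b \equiv 1$ it reduces to von Neumann's classical theorem along the F\o lner sequence $(F_N)$, exactly as invoked in the proof of Theorem \ref{ergodicback}. For a general averaging scheme one must leverage the normalization $\frac{1}{b(N)|F_N|}\sum a(g) \to 1$ together with the F\o lner property of $(F_N)$ and ergodicity in order to control the $L^2$-norm of the weighted averages of functions orthogonal to the constants; the argument can be recast in spectral terms, estimating the weighted autocorrelation $\frac{1}{(b(N)|F_N|)^2}\sum_{g,g'} a(g)a(g') \langle H_j^{\perp}, T_{g^{-1}g'} H_j^{\perp}\rangle$ and showing it tends to zero. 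Once this step is secured, the remainder of the proof is identical in spirit to that of Theorem \ref{ergodicback}.
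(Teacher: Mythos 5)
Your overall strategy is exactly the paper's: reduce to a weighted mean ergodic theorem in $L^2(\mu)$ for the countable family of products $\prod_{i=1}^r T_{g_i}\tilde{F_i}$, upgrade to almost everywhere convergence along a subsequence by a diagonal extraction, and then sample along the orbit of a generic point, setting $f_i(g)=F_i(T_gx)$. The paper's proof is a two-line reduction to the argument of Theorem \ref{ergodicback}, so on structure you and the authors agree, and your closing steps (the diagonal extraction, the full-measure set $X_0$, and the identity $\prod_i\tilde{f_i}(g_ig)=\bigl(\prod_i T_{g_i}\tilde{F_i}\bigr)(T_gx)$) are correct.

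The problem is the step you yourself single out as ``the main obstacle'': the weighted $L^2$ convergence
\[
\frac{1}{b(N)|F_N|}\sum_{g\in F_N}a(g)\,T_gH\ \longrightarrow\ \int_X H\,d\mu .
\]
You do not prove it, and it is not a routine adaptation of von Neumann's theorem: the only hypothesis on an averaging scheme is the normalization $\frac{1}{b(N)|F_N|}\sum_{g\in F_N}a(g)\to 1$, and that is not enough. Concretely, take $G=\Z$, $F_N=\{1,\dots,N\}$, $a(g)=2^{g}$ for $g\geq 1$, and $b(N)=\frac{1}{N}\sum_{g=1}^{N}2^{g}$; the weight concentrates essentially all of its mass on the few largest elements of $F_N$, so for an irrational rotation by $\alpha$ and $H(x)=e^{2\pi i x}$ the weighted averages equal $\bigl(\sum_{g\le N}2^{g}e^{2\pi i g\alpha}/\sum_{g\le N}2^{g}\bigr)H$, whose $L^2$-norm tends to $1/|2e^{2\pi i\alpha}-1|>0$ while $\int_X H\,d\mu=0$; no subsequence repairs this. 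The reason is that the standard proof of the mean ergodic theorem for amenable groups kills the coboundary part $f-T_{g_0}f$ using the F\o lner property of $(F_N)$, and the weighted analogue needs a F\o lner-type condition on the weights themselves, e.g.\ $\frac{1}{b(N)|F_N|}\sum_{g\in G}\bigl|a(g)\mathbb{1}_{F_N}(g)-a(g_0^{-1}g)\mathbb{1}_{g_0F_N}(g)\bigr|\to 0$ for every $g_0\in G$, which does not follow from the normalization alone (it does hold for, say, logarithmic weights $a(n)=1/n$ on $\Z$). So your sketch has a genuine gap at precisely this point --- one that the paper's own proof, which silently invokes the argument of Theorem \ref{ergodicback}, shares; to complete the argument you must either impose such an additional hypothesis on the averaging scheme or verify the weighted mean ergodic theorem for the particular schemes in play.
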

	\begin{proof} We argue as in the proof of Theorem \ref{ergodicback} for $\mathcal{F}:=\{\prod_{i=1}^m T_{g_i}\tilde{F_i} : r \in \N, g_1,\dots,g_r \in G, \tilde{F_i} \in \{F_1,\dots,F_{\ell},\bar{F_1},\dots,\bar{F_{\ell}}\}\}$ to obtain a full measure subset $X_0$ such that for all $x \in X_0$ and $H \in \mathcal{F}$ we have	
		\begin{equation}
		\lim_{k \to \infty} \frac{1}{b(N_k)|F_{N_k}|}\sum_{g \in F_{N_k}} a(g)H(T_gx)=\int_X H \ d\mu.
		\end{equation}
		Now, for each $x \in X_0$ we can let $f_{i,x}(g)=F_i(T_gx)$, $i=1,\dots,\ell$, and one easily checks that for all $x \in X_0$ the family $\{f_{1,x},\dots,f_{\ell,x}\}$ satisfies \eqref{mirrorfamily} as desired.
	\end{proof}
	
\end{document}